\newtheorem{thm}{Theorem}                % ¶¨Àí£¬ Èç¹û²»²ÉÓÃÕ½ںÅ×öǰ׺£¬Ôò²»ÓÃ[section]
\newtheorem{proposition}[thm]{Proposition}  % ÃüÌ⣬ Èç¹û²»²ÉÓÃÕ½ںÅ×öǰ׺£¬Ôò²»ÓÃ[section]
\renewcommand{\figurename}{Fig.}            % ½«Figure  ת»¯Îª  Fig.
\begin{document}

\begin{frontmatter}

%% Title, authors and addresses

%% use the tnoteref command within \title for footnotes;
%% use the tnotetext command for the associated footnote;
%% use the fnref command within \author or \address for footnotes;
%% use the fntext command for the associated footnote;
%% use the corref command within \author for corresponding author footnotes;
%% use the cortext command for the associated footnote;
%% use the ead command for the email address,
%% and the form \ead[url] for the home page:
%%
%% \title{Title\tnoteref{label1}}
%% \tnotetext[label1]{}
%% \author{Name\corref{cor1}\fnref{label2}}
%% \ead{email address}
%% \ead[url]{home page}
%% \fntext[label2]{}
%% \cortext[cor1]{}
%% \address{Address\fnref{label3}}
%% \fntext[label3]{}

\title{  Well-balanced finite difference WENO schemes\\ for the blood flow model}

%\tnotetext[label1]{The research of the first author is supported
%by the National Natural Science Foundation of  China (No. 41171183, 11201254) and the Project for Scientific Plan of Higher Education in Shandong Providence of China %(No. J12LI08).  The second author is sponsored by MIUR, Prin2009, Metodi numerici innovativi per problemi iperbolici con applicazioni in fluidodinamica, teoria cinetica %e biologia computazionale. }

\author[label2]{Zhenzhen Wang}
\ead{wzz0667@163.com}
\author[label2]{Gang Li\corref{cor1}}
\ead{gangli1978@163.com}
\author[label3]{Olivier Delestre}
\ead{Delestre@unice.fr}

\address[label2]{ School of Mathematics and Statistics, Qingdao University, Qingdao, Shandong 266071, P.R. China   }
\address[label3]{ Laboratory J.A. Dieudonn\'{e} $\&$ EPU Nice Sophia, UMR 7351 Parc Valrose, 28 Avenue Valrose 06108 Nice Cedex 02, 06000 Nice, France }

\cortext[cor1]{Corresponding author. Tel.: +86-0532-85953660. Fax: +86-0532-85953660.}

\begin{abstract}
The blood flow model maintains the steady state solutions, in which the flux gradients are non-zero but
exactly balanced by the source term.
In this paper, we design high order finite difference weighted non-oscillatory (WENO) schemes to this model with such well-balanced property and at the same time keeping genuine high order accuracy.
Rigorous theoretical analysis as well as extensive numerical results all indicate that the resulting schemes verify high order accuracy, maintain the well-balanced property, and keep good resolution for smooth and discontinuous solutions.
\end{abstract}

\begin{keyword}
Blood flow model; Finite difference schemes; WENO schemes; Well-balanced property; High order accuracy; Source term
\end{keyword}

\end{frontmatter}

%% Start line numbering here if you want

%\linenumbers

%% main text

\section{Introduction}     \label{introduction}
\setcounter{equation}{0}
\setcounter{figure}{0}
\setcounter{table}{0}

In this paper, we are interested in numerical simulation for the blood flow model by high order finite difference schemes. The numerical simulations with high order accuracy have a wide applications in medical engineering  \cite{Formaggia2006,Kolachalama2007}.
As quoted by Xiu and Sherwin \cite{Xiu2007}, the blood flow in arteries model was written long time ago by Leonhard Euler in 1775. However, the model is too difficult to solve \cite{Parker2009}. Herein, for the sake of simplicity, we neglect the friction term and consider the following governing equations
\begin{equation}   \label{eqn-1}
\left\{
\begin{array}{l}
\partial_{t}A+\partial_{x}Q=0,\\
\partial_{t}Q+\partial_{x}\left(\frac{Q^{2}}{A}+\frac{k}{3\rho \sqrt{\pi} } A^{\frac{3}{2}}  \right) =
\frac{kA}{\rho \sqrt{\pi}}\partial_{x} ( \sqrt{ A_{0} } ),
\end{array}
\right.
\end{equation}
where $A$ is the cross-sectional area ($A = \pi R^{2}$ with $R$ being the radius of the vessel),
$Q = Au$ denotes the discharge, $u$ means the flow velocity, and $\rho$ stands for the blood density. $k$ represents the stiffness arterial.
 In addition, $A_0$ is the cross section at rest (\emph{i.e.,} $A_0 = \pi R_0^2$ with $R_0$ being the radius of the vessel, which may be variable in the case of aneurism, stenosis or taper).

The blood flow model (\ref{eqn-1}) with the source term are also called as \textit{balance laws}. This model
can admit the following steady state solutions, also called ``man at eternal rest'' (by analogy to the ``lake at rest''
 in the shallow water equations)
\begin{equation}    \label{steady-1D}
u=0, \; \;  \; A = A_0.
\end{equation}
For the steady state solutions (\ref{steady-1D}), the source term is exactly balanced by the non-zero flux gradient.
Thus it is desirable to maintain the balance between the flux gradient and the source term at the discrete level. But such a balance is usually neither a constant nor a polynomial function.
%So, it is difficult for the standard numerical schemes to preserve all the solutions of (\ref{steady-1D-1}) and (\ref{steady-1D-2}).
So standard numerical schemes usually fail to capture the steady state solutions well and  may generally introduce spurious oscillations. The mesh must be extremely refined to reduce the size of these oscillations, but this strategy is impractical for multi-dimensional cases due to the high computational costs.
Berm\'{u}dez and V\'{a}zquez \cite{Bermudez} in 1994 proposed the idea of ``exact conservation property'', which means that a scheme is exactly compatible with the steady state solutions. This property is also known as ``well-balanced'' property and is crucial for the balance between the flux gradient and the source term. An efficient scheme should satisfies this well-balanced property. Such schemes are often regarded as well-balanced schemes after the pioneering works of Greenberg et al. \cite{Greenberg1996,Greenberg1997}.
The well-balanced schemes can preserve exactly these steady state solutions up to the machine error free of excessive mesh refinement and save computational cost accordingly. Moreover, the important advantage of well-balanced schemes over non-well-balanced schemes is that they can accurately resolve small perturbations of such steady state solutions with relatively coarse meshes \cite{Noelle2010, Xing2011-1}. More information about well-balanced schemes can be found in the lecture note \cite{Noelle2010}. Many researchers have developed well-balanced schemes for the shallow water
equations admitting the still water steady state using different approaches, see, \emph{ e.g.,} \cite{LeVeque1998, Perthame2001, Xu2002, Audusse2004, Xing2005, Xing2014} and the references therein. It is a challenging to design well-balanced schemes for the moving water equilibrium of the shallow water equations.
Most well-balanced schemes for the still water steady state cannot preserve the moving water equilibrium automatically.  A few attempts can be found in \cite{Noelle2007, Xing2014-2, Bouchut2010}. In addition, the research of the well-balanced schemes for the Euler equations under gravitational fields is also an active subject
\cite{Xu2007,Xu2010,Xu2011,Kappeli2014,Xing2013,Li2015}.

In recent years, there have been many interesting attempts proposed in the literature to derive well-balanced schemes for the blood flow model. For example,  Delestre \emph{et al.} \cite{Delestre2013} present a well-balanced finite volume scheme for the blood flow model
based on the conservative governing equations \cite{Wibmer2004,Cavallini2008,Cavallini2010}. Recently, M\"{u}ller \emph{et al.} \cite{Muller2013} constructed a well-balanced high order finite volume for the blood flow in elastic vessels with varying mechanical properties.
More recently, Murillo \emph{et al.} \cite{Murillo2015} present an energy-balanced approximate solver for the blood flow model with upwind discretization for the source term.

The main objective of this paper is to design a well-balanced finite difference weighted non-oscillatory (WENO) scheme which maintains the well-balanced property and at the same time keeps genuinely high order accuracy for the general solutions of the blood flow model, based on a special splitting of the source term into two parts which are discretized separately.

This paper is organized as follows: in Section \ref{the-scheme}, we propose a high order well-balanced finite difference WENO scheme. Extensive numerical experiments are carried out in Section \ref{the-results}. Conclusions are given in Section \ref{the-conclusion}.

\section{Well-balanced WENO schemes   } \label{the-scheme}

In this section, we present high order well-balanced WENO schemes for the blood flow model satisfying the steady state solution
(\ref{steady-1D}).

\subsection{ Notations }
For simplicity, we assume that the grid points $\{x_j \}$ are
uniformly distributed  with cell size $\Delta x = x_{j+1}-x_j$ and we denote the cells by $I_j=\left[ x_{j-1/2}, \; x_{j+1/2}\right]$
 with $x_{j+1/2}=x_j+\Delta x/2$ as the center of the cell $I_j$.

\subsection{A review of finite difference WENO schemes}

The first finite difference WENO scheme was designed in 1996 by Jiang and Shu \cite{Jiang1996} for hyperbolic conservation laws. More detailed information of WENO schemes can be found in the lecture note \cite{Shu1997}. For the latest advances regarding WENO schemes, we refer to the review \cite{Shu2009}. We begin with the description for the 1D scalar conservation laws
\begin{equation}    \label{laws-1D}
u_t+f(u)_x=0.
\end{equation}
High order semi-discrete conservative finite difference schemes of ($\ref{laws-1D}$)
can be formulated as follows
\begin{equation}  \label{semi-scheme}
\frac{{\rm d}}{{\rm d}t} u_j(t)   =   -\frac{1}{\Delta
x}\left(\hat{f}_{j+1/2}-\hat{f}_{j-1/2}\right),
\end{equation}
where $u_j(t)$ is the numerical approximation to the point value
$u(x_j,t)$, and the numerical flux $\hat{f}_{j+1/2}$ is used to approximate
$h_{j+1/2}=h\left(x_{j+1/2}\right)$ with high order accuracy. Here $h(x)$
is implicitly defined as in \cite{Jiang1996}
$$
f(u(x))=\frac{1}{\Delta x}\int^{x+\Delta x/2}_{x-\Delta
x/2}h(\xi)d\xi.
$$

We take upwinding into account to maintain the numerical stability and splitting a general flux into two
parts
$$
f(u) = f^+(u) + f^-(u),
$$
where $ \displaystyle  \frac{ {\rm d} f^+(u) }{{\rm d} u} \geq 0$ and $ \displaystyle  \frac{ {\rm d} f^-(u) }{{\rm d} u} \leq 0$.
One example is the simple Lax-Friedrichs flux
\begin{equation}
f^{\pm}(u) = \frac{1}{2}(f(u) \pm \alpha u),
\end{equation}
where $ \alpha= \max\limits_u \big|\lambda(u)\big|$ with $\lambda(u)$ being the eigenvalues of the Jacobian $f'(u)$, and the maximum
is taken over the whole region. With respect to $f^+(u)$ and $f^-(u)$, we can get numerical fluxes $\hat{f}^+_{j+1/2}$ and $\hat{f}^-_{j+1/2}$ using the WENO reconstruction, respectively. Finally, we get the numerical fluxes as follows $$\hat{f}_{j+1/2} = \hat{f}^{+}_{j+1/2} + \hat{f}^{-}_{j+1/2}.$$

By means of the WENO approximation procedure, $\hat{f}^+_{j+1/2}$ is expressed as \cite{Jiang1996}
\begin{equation}   \label{weno}
\hat{f}^+_{j+1/2} = \sum^{r}_{k=0}\omega_kq^r_k\left(f^+_{j+k-r},\ldots,f^+_{j+k}\right),
\end{equation}
where $\omega_k$ is the nonlinear weight,
$f^+_i=f^+(u_i),\,i=j-r,\ldots,j+r,$
and
\begin{equation}
q^r_k\left(\mbox{g}_0,\ldots,\mbox{g}_{r}\right)=\sum^{r}_{l=0}a^r_{k,l}\mbox{g}_l
\end{equation}
 is the low order approximation to $\hat{f}^+_{j+1/2}$ on the $k$th
stencil $S_k=(x_{j+k-r},\ldots,x_{j+k}),k=0,1,\ldots,r$, and
$a^r_{k,l}, \; 0 \leq k , \, l \leq r$ are constant coefficients, see
\cite{Shu1997} for more details.

The nonlinear weights $\omega_k$ in (\ref{weno}) satisfy
$$ \sum^{r}_{k=0}\omega_k=1, $$
and are designed to yield $(2r+1)$th-order accuracy in smooth regions
of the solution. In \cite{Jiang1996,Shu1997}, the nonlinear weight
$\omega_k$ is formulated as
\begin{equation}    \label{linearweight}
\omega_k=\frac{\alpha_k}{
\sum\limits^{r}_{l=0}\alpha_l},  \; \; \mbox{with} \; \;
\alpha_k=\frac{C^r_k}{\left(\varepsilon_{_{\text{WENO}}} + IS_k\right)^2}, \; \; k=0,1,\ldots,r,
\end{equation}
where $C^r_k$ is the linear weight. $IS_k$ is a smoothness indicator of $f^+(u)$ on
stencil $S_k,k=0,1,\ldots,r$, and $\varepsilon_{_{\text{WENO}}} $ is a small constant
used here to avoid the denominator becoming zero, $ \varepsilon_{_{\text{WENO}}} = 10^{-6}$ is used in all test cases in this paper. We employed
the smoothness indicators proposed in \cite{Jiang1996,Shu1997}, \emph{i.e.,}
$$
IS_k
=
\sum^{r}_{l=1}\int^{x_{j+1/2}}_{x_{j-1/2}}(\Delta x)^{2l-1}\left(q_k^{(l)}\right)^2dx,$$
where $q^{(l)}_k$ is the $l$th-derivative of $q_k(x)$ which is the reconstruction
polynomial of $f^+(u)$ on stencil $S_k$ such that£º
$$
\frac1{\Delta x} \int_{I_i} q_k(x)dx=f^+_i,\,i=j+k-r,\ldots,j+k.
$$

The WENO approximation procedure for $\hat{f}^-_{j+\frac{1}{2}}$
is a mirror symmetry to that of $\hat{f}^+_{j+1/2}$ with respect to
$x_{j+1/2}$.

Consequently, the numerical flux $\hat{f}_{j+1/2}$ is then calculated by
$$\hat{f}_{j+1/2}=\hat{f}^+_{j+1/2}+\hat{f}^-_{j+1/2}.$$
Ultimately, we obtain the semi-discrete scheme (\ref{semi-scheme}).

\subsection{Well-balanced WENO schemes for the blood flow model } \label{scheme-1D}

In order to design well-balanced schemes, we firstly split the source term
 $ \frac{kA}{\rho \sqrt{\pi} }\partial_{x} ( \sqrt{ A_{0}}) $ into two terms
$ \frac{k}{ \rho \sqrt{\pi}} (A - A_0) \partial_{x} ( \sqrt{ A_{0}} )  + \frac{k}{3 \rho \sqrt{\pi}} \partial_{x} \left(A_0^{\frac{3}{2}}\right)  $ in a equivalent form.  Therefore the original system (\ref{eqn-1}) becomes
\begin{equation}   \label{eqn-1-modified}
\left\{
\begin{array}{l}
\partial_{t}A + \partial_{x}Q=0,\\
\partial_{t}Q + \partial_{x}\left( \frac{Q^{2}}{A}+\frac{k}{3\rho \sqrt{\pi} } A^{\frac{3}{2}}  \right) =
 \frac{k}{\rho \sqrt{\pi} } (A - A_0) \partial_{x} ( \sqrt{A_{0} } ) + \frac{k}{3 \rho \sqrt{\pi}} \partial_{x} \left(A_0^{\frac{3}{2}}\right),
\end{array}
\right.
\end{equation}
which can be denoted in a compact vector form
$$
U_t + f(U)_x = S_1 + S_2,
$$
where $U= (A, \; Q)^{T}$, $f(U) = \left(Q, \;  \frac{Q^{2}}{A}+\frac{k}{3\rho \sqrt{\pi} } A^{\frac{3}{2}}   \right)$,
 $S_1 = \left(0, \;    \frac{k}{ \rho \sqrt{\pi}} (A - A_0) \partial_{x}  (\sqrt{A_{0}})    \right)^T$ and
$S_2 = \left( 0, \;  \frac{k}{3\rho \sqrt{\pi} }  \partial_{x} \left( A_0^{\frac32} \right)  \right)^T$.

%This special splitting in the source term of (\ref{eqn-1-modified}) is crucial for the design of the well-balanced WENO schemes
%for the steady state solutions described by (\ref{steady-1D}).

Subsequently, we consider a numerical scheme for solving (\ref{eqn-1-modified}). The scheme may be classified as a
   \emph{linear  scheme}, because all of the spatial derivatives are approximated by a linear finite difference
operator $D$ that is defined to satisfy
\begin{equation}  \label{linear-operator}
D(\alpha f + \beta g)=\alpha D(f)+\beta D(g)
\end{equation}
 \noindent for any constants $\alpha, \; \beta$ and grid functions $f$ and $g$.

For such a linear scheme, we have
\begin{proposition} \label{proposition-1}
A linear scheme for the 1D blood flow model satisfying the steady state solutions (\ref{steady-1D}) can maintain the well-balanced property.
\end{proposition}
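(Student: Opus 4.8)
The plan is to substitute the discrete steady state directly into the semi-discrete scheme and verify that the right-hand side vanishes identically, term by term. At the equilibrium (\ref{steady-1D}) one has $u=0$, so the discharge $Q=Au=0$ at every node, while $A=A_0$ as a grid function. Writing the semi-discretization of (\ref{eqn-1-modified}) with the linear operator $D$ in place of each spatial derivative, the continuity equation becomes $\frac{\mathrm{d}}{\mathrm{d}t}A_j=-D(Q)_j$ and the momentum equation becomes
\[
\frac{\mathrm{d}}{\mathrm{d}t}Q_j=-D\!\left(\frac{Q^2}{A}+\frac{k}{3\rho\sqrt{\pi}}A^{3/2}\right)_{\!j}+\frac{k}{\rho\sqrt{\pi}}\big(A_j-(A_0)_j\big)D(\sqrt{A_0})_j+\frac{k}{3\rho\sqrt{\pi}}D(A_0^{3/2})_j .
\]

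First I would dispatch the continuity equation: because $Q$ is the zero grid function, linearity (\ref{linear-operator}) gives $D(Q)=D(0)=0$, so $A_j$ is stationary. The core of the argument is then the momentum equation. Setting $Q=0$ removes the kinetic flux $Q^2/A$, and $A=A_0$ reduces the remaining flux to the grid function $\frac{k}{3\rho\sqrt{\pi}}A_0^{3/2}$; factoring the constant out of $D$ via (\ref{linear-operator}) yields $D\big(\frac{k}{3\rho\sqrt{\pi}}A^{3/2}\big)_j=\frac{k}{3\rho\sqrt{\pi}}D(A_0^{3/2})_j$, which is exactly the discrete $S_2$ term, so the two cancel identically. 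The $S_1$ contribution $\frac{k}{\rho\sqrt{\pi}}(A_j-(A_0)_j)D(\sqrt{A_0})_j$ vanishes node by node because $A_j-(A_0)_j=0$, irrespective of the value of $D(\sqrt{A_0})_j$. Hence the entire momentum residual is zero and $\frac{\mathrm{d}}{\mathrm{d}t}U_j\equiv0$.

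The step I expect to be the crux—and the very reason (\ref{eqn-1-modified}) splits the source as $S_1+S_2$—is the exact cancellation between the discretized flux derivative of $\frac{k}{3\rho\sqrt{\pi}}A^{3/2}$ and the discretized $S_2$. This is exact only if one and the same linear operator $D$ is used for both, so that the common constant factors out and $D$ acts on the identical grid function $A_0^{3/2}$; this is precisely where property (\ref{linear-operator}) is indispensable, and I would emphasize that the flux reconstruction and the $S_2$ discretization must be generated by the same $D$—the same operator that then sends the vanishing continuity flux $Q$ to zero. Having established $\frac{\mathrm{d}}{\mathrm{d}t}U_j\equiv0$ for the semi-discrete scheme, I would close by noting that a strong-stability-preserving Runge--Kutta integrator, being a convex combination of forward-Euler stages each of which sees a vanishing residual, leaves the equilibrium unchanged up to round-off, giving the fully discrete well-balanced property.
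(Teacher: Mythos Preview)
Your argument is correct and follows essentially the same line as the paper's: $Q=0$ trivializes the continuity equation and the kinetic flux, $A=A_0$ annihilates the $S_1$ contribution, and linearity of $D$ makes $\frac{k}{3\rho\sqrt{\pi}}D(A^{3/2})$ cancel exactly against the discretized $S_2$. Your added remarks on the necessity of using the \emph{same} operator $D$ for flux and source and on the SSP Runge--Kutta time integrator go slightly beyond the paper's short proof of this proposition but are fully consistent with its intent.
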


\begin{proof}
For the steady state solutions (\ref{steady-1D}), linear schemes satisfying (\ref{linear-operator}) are exact for the first equation
$ \partial_{x}Q=0 $, since $Q=0$ due to $u=0$, and the truncation error for the second equation reduces to
$$
\begin{array} {l}
D\left(  \frac{Q^{2}}{A}+\frac{k}{3\rho \sqrt{\pi} } A^{\frac{3}{2}}   \right) - \frac{k}{\rho \sqrt{\pi}} (A - A_0) D(\sqrt{A_0} )
+ \frac{k}{3 \rho\sqrt{\pi}}  D \left( A_0^{\frac{3}{2}} \right) \\
= D\left( \frac{k}{3\rho \sqrt{\pi} } A^{\frac{3}{2}} - \frac{k}{3\rho \sqrt{\pi} } A_0^{\frac{3}{2}}   \right)  \\
= 0,
\end{array}
$$
where the first equality thanks to the facts that $Q=0$ due to $u=0$ and $A = A_0$ as well as the linearity of the finite difference operator $D$; the second one is also due to
the fact that $ A = A_0$ and the consistency of the finite difference operator $D$. As a consequence, this finishes the proof.
\end{proof}

However, the WENO schemes are nonlinear. The nonlinearity comes from
the nonlinear weight, which in turn comes from the nonlinearity of
the smoothness indicators. In order to construct a linear scheme
which can maintain the well-balanced property even with the presence of
the nonlinearity of the nonlinear weight and does not affect the high-order accuracy, we must take some modifications.

To present the basic ideas of the modification, we firstly consider the situation
when the WENO scheme is applied without the flux splitting and the
local characteristic decomposition.

Before considering an approximation of the flux gradient $f(U)_x$, we must firstly reconstruct the numerical flux
$\hat{f}_{j+1/2}$. We consider a WENO scheme with a global Lax-Friedrichs flux splitting, denoted by the WENO-LF scheme. Now the flux $f(U)$ writes
$$f(U) = f^+(U) + f^-(U),$$
where
\begin{equation}  \label{flux-splitting-original}
f^{\pm}(U)=\frac{1}{2}\left[ \left(\begin{array}{c}
Q \\
\frac{Q^{2}}{A}+\frac{k}{3\rho \sqrt{\pi} } A^{\frac{3}{2}}
\end{array}
\right) \pm \alpha _i\left(
\begin{array}{c}
A\\
Q
\end{array}
\right) \right],
\end{equation}
with
\begin{equation}  \label{alpha}
\alpha_i=\max\limits_u|\lambda_i(u)|
\end{equation}
for the $i$th characteristic field, where $\alpha_i = \max\limits_u |\lambda_i(u)|$ with $\lambda_i(u)$ being the $i$th eigenvalue of the Jacobian
$f'(U)$. In order to design a linear finite difference operator, we adopt a minor modification to the flux splitting by replacing
$\pm \alpha_i
\left(
\begin{array}{c}
A \\
Q
\end{array}
\right)$
in (\ref{flux-splitting-original}) with
$\pm \alpha_i
\left(
\begin{array}{c}
A - A_0\\
Q
\end{array}
\right)$.
So the flux splitting (\ref{flux-splitting-original}) now becomes
\begin{equation} \label{flux-splitting-modified}
f^{\pm}(U)=\frac{1}{2}\left[ \left(\begin{array}{c}
Q \\
\frac{Q^{2}}{A} + \frac{k}{3\rho \sqrt{\pi} } A^{\frac{3}{2}}
\end{array}
\right) \pm \alpha _i\left(
\begin{array}{c}
A - A_0\\
Q
\end{array}
\right) \right].
\end{equation}
This modification is justified by the fact that $A_0$ is independent of time $t$.

Provided $\hat{f}_{j+1/2}=\hat{f}_{j+1/2}^{+} + \hat{f}_{j+1/2}^{-}$ based on the WENO approximation procedure using the modified flux splitting (\ref{flux-splitting-modified}), the flux gradient $f(U)_x$ may be finally approximated by
$$f(U)_x \big|_{x=x_j} \approx  \frac{\hat{f}_{j+1/2}-\hat{f}_{j-1/2}}{\Delta x}.$$

Herein, in order to achieve a more accurate solution at the price of more complicated computations, the WENO
approximation is implemented with a local characteristic
decomposition procedure, see \cite{Shu1997} for more details.

Subsequently, the WENO-LF schemes can be demonstrated to maintain the steady state solutions (\ref{steady-1D}), \emph{i.e.,} to satisfy the well-balanced property.

Firstly, $\hat{f}^{+}_{j+1/2}$ is given by
\begin{equation}   \label{fp}
\begin{array}            {lcl}
\hat{f}^{+}_{j+1/2} &=& \sum\limits_{k=-r}^{r}c_kf^+_{j+k}\\
                    &=& \sum\limits_{k=-r}^{r}c_k\frac{1}{2}\left(f_{j+k} + \alpha U_{j+k} \right) \\
                    &=& \frac{1}{2}\sum\limits_{k=-r}^{r}c_kf_{j+k} + \frac{1}{2} \sum\limits_{k=-r}^{r}c_k\left(\alpha U_{j+k}\right),
\end{array}
\end{equation}
where $f^{+}=f^{+}(U)$ is defined in (\ref{flux-splitting-modified}) with
$U=(A - A_0, Q)^T$ and $f=f(U)=\left(Q,  \frac{Q^{2}}{A}+\frac{k}{3\rho \sqrt{\pi} } A^{\frac{3}{2}}   \right)^T$ being the vector
grid functions, $c_k$ is a $2\times2$ matrix depending nonlinearly
on the smoothness indicators of $f^{+}$ on the stencil
$\{x_{j-r},\ldots,x_{j+r}\}$, and $\alpha$ is a $2\times2$ diagonal
matrix involving $\alpha_i$ in (\ref{alpha}).

Similarly, $\hat{f}^{-}_{j+1/2}$ can be written as
\begin{equation}\label{fm}
\begin{array}{lcl}
\hat{f}^{-}_{j+1/2} &=& \sum\limits_{k=-r+1}^{r+1}a_kf^{-}_{j+k} \\
                    &=& \sum\limits_{k=-r+1}^{r+1}a_k\frac{1}{2}\left(f_{j+k} - \alpha U_{j+k} \right) \\
                    &=& \frac{1}{2}\sum\limits_{k=-r+1}^{r+1}a_kf_{j+k} - \frac{1}{2}
                            \sum\limits_{k=-r+1}^{r+1}a_k\left(\alpha U_{j+k}\right),
\end{array}
\end{equation}
where $f^{-}=f^{-}(U)$.  As $c_k$ in (\ref{fp}), herein $a_k$ is also a $2\times2$ matrix but depending nonlinearly
on the smoothness indicators of $f^{-}$ on the stencil
$\{x_{j-r+1},\ldots,x_{j+r+1}\}$, and $\alpha$ is a $2\times2$ diagonal
matrix involving $\alpha_i$ in (\ref{alpha}).

Ultimately, we have
\begin{equation} \label{f-plus}
\begin{array}         {lcl}
\hat{f}_{j+1/2}  &=&    \hat{f}_{j+1/2}^{+} + \hat{f}_{j+1/2}^{-}. \\
                 &=&         \frac{1}{2}\sum\limits_{k=-r}^{r}c_kf_{j+k} + \frac{1}{2}
\sum\limits_{k=-r}^{r}c_k\left(\alpha U_{j+k}  \right)
+
\frac{1}{2}\sum\limits_{k=-r+1}^{r+1}a_kf_{j+k} - \frac{1}{2}
\sum\limits_{k=-r+1}^{r+1}a_k\left(\alpha U_{j+k} \right).
\end{array}
\end{equation}

Likewise, $\hat{f}_{j-1/2}^{+}$ and $\hat{f}_{j-1/2}^{-}$ can be defined. So, we can obtain $\hat{f}_{j-1/2}$ as follows
\begin{equation}\label{f-minus}
\begin{array}  {lcl}
\hat{f}_{j-1/2} &=&   \hat{f}_{j-1/2}^{+} + \hat{f}_{j-1/2}^{-}. \\
                &=&   \frac{1}{2}\sum\limits_{k=-r-1}^{r-1}\hat{c}_kf_{j+k} + \frac{1}{2}
\sum\limits_{k=-r-1}^{r-1}\hat{c}_k \left(\alpha U_{j+k}\right) +
\frac{1}{2}\sum\limits_{k=-r}^{r} \hat{a}_k f_{j-k}  -  \frac{1}{2}\sum\limits_{k=-r}^{r} \hat{a}_k \left(\alpha U_{j-k}\right).
\end{array}
\end{equation}
Herein, $\hat{c}_k$ is a $2\times2$ matrix depending nonlinearly
on the smoothness indicators of $f^{+}$ on the stencil
$\{x_{j-r-1},\ldots,x_{j+r-1}\}$.  $\hat{a}_k$ is also a $2\times2$ matrix depending nonlinearly
on the smoothness indicators of $f^{-}$ on the stencil
$\{x_{j-r},\ldots,x_{j+r}\}$.

Subsequently, the approximation to $f(U)_x$ can be obtained
as follows
\begin{equation}\label{approximation-f}
\begin{array}{lcl}
f(U)_x \big|_{x=x_j}
&\approx& \frac{1}{\Delta
x}\left(\hat{f}_{j+1/2} -
\hat{f}_{j-1/2}\right)\\
&=&\frac{1}{\Delta x}\left[ \left(
\frac{1}{2}\sum\limits_{k=-r}^{r}c_kf_{j+k} + \frac{1}{2}
\sum\limits_{k=-r}^{r}c_k\left(\alpha U_{j+k}\right) +
\frac{1}{2}\sum\limits_{k=-r+1}^{r+1}a_kf_{j+k} - \frac{1}{2}
\sum\limits_{k=-r+1}^{r+1}a_k\left(\alpha U_{j+k}\right)
 \right)\right. \\
&-& \left.\left(
\frac{1}{2}\sum\limits_{k=-r-1}^{r-1}\hat{c}_kf_{j+k} + \frac{1}{2}
\sum\limits_{k=-r-1}^{r-1}\hat{c}_k \left(\alpha U_{j+k}\right) +
\frac{1}{2}\sum\limits_{k=-r}^{r} \hat{a}_kf_{j-k} -
                       \frac{1}{2}\sum\limits_{k=-r}^{r} \hat{a}_k \left(\alpha U_{j-k}\right)\right)
\right]
 \\
&=& \frac{1}{2\Delta x}\left(\sum\limits_{k=-r}^{r}c_kf_{j+k} -
\sum\limits_{k=-r-1}^{r-1}\hat{c}_kf_{j+k}\right)\\
&+& \frac{1}{2\Delta x}\left(\sum\limits_{k=-r+1}^{r+1}a_kf_{j+k} -
\sum\limits_{k=-r}^{r}\hat{a}_kf_{j-k}\right)\\
&+& \frac{1}{2\Delta x}\left(\sum\limits_{k=-r}^{r}c_k(\alpha
U_{j+k}) -
\sum\limits_{k=-r-1}^{r-1}\hat{c}_k(\alpha U_{j+k})\right)\\
&+& \frac{1}{2\Delta x}\left(\sum\limits_{k=-r}^{r} \hat{a}_k(\alpha
U_{j-k}) - \sum\limits_{k=-r+1}^{r+1}a_k(\alpha U_{j+k})\right).
\end{array}
\end{equation}

It should be noted that with
$\pm \alpha U= \pm \alpha
\left(A - A_0, \, Q \right)^T$
instead of
$\pm \alpha
\left(A, \, Q\right)^T$ in the flux splitting (\ref{flux-splitting-original}),
the first two terms on the right hand side of the above expression become constant vectors for the steady state solutions (\ref{steady-1D}). Denoting $U_{j+k}$ as $U$ for simplicity, we have $\alpha U_{j+k} = \alpha U$ as a constant vector. Consequently
\begin{equation}
\begin{array}{ll}
 & \frac{1}{2\Delta x}\left(\sum\limits_{k=-r}^{r}c_k\left(\alpha U_{j+k}\right)
- \sum\limits_{k=-r-1}^{r-1}\hat{c}_k\left(\alpha U_{j+k}\right)\right) \\ = &
\frac{1}{2\Delta x}\left(\sum\limits_{k=-r}^{r}c_k(\alpha U ) -
\sum\limits_{k=-r-1}^{r-1}\hat{c}_k(\alpha U )\right) \\
= & \frac{1}{2\Delta
x}\left[\left(\sum\limits_{k=-r}^{r}c_k\right)(\alpha U ) -
\left(\sum\limits_{k=-r-1}^{r-1}\hat{c}_k\right)(\alpha U )\right]
\\
= & \frac{1}{2\Delta x} \left[I \cdot(\alpha U) - I \cdot(\alpha U)\right] \\
= & 0,
\end{array}
\end{equation}
where $I$ is a $2\times2$ identity matrix, the identities
$\sum\limits_{k=-r}^{r}c_k = I$ and
$\sum\limits_{k=-r-1}^{r-1}\hat{c}_k = I$ are due to the consistency
of the WENO approximation. Similarly, we have
\begin{equation}
\frac{1}{2\Delta x}\left(\sum\limits_{k=-r}^{r}  \hat{a}_k\left(\alpha U_{j-k}\right) -
\sum\limits_{k=-r+1}^{r+1} a_k\left(\alpha U_{j+k}\right)\right) = 0.
\end{equation}

Finally, the approximation to $f(U)_x$ in
(\ref{approximation-f}) can be written as
\begin{equation}\label{Df}
\begin{array}{lcl}
f(U)_x\big|_{x=x_j}
&\approx&
\frac{1}{\Delta
x}\left(\hat{f}_{j+1/2} - \hat{f}_{j-1/2}\right)  \\
&=& \frac{1}{2\Delta x}\left(\sum\limits_{k=-r}^{r}c_k f_{j+k} -
\sum\limits_{k=-r-1}^{r-1}\hat{c}_kf_{j+k}\right)\\
&+& \frac{1}{2\Delta x}\left(\sum\limits_{k=-r+1}^{r+1}a_kf_{j+k} -
\sum\limits_{k=-r}^{r} \hat{a}_k f_{j-k}\right) \\
&=& \sum\limits_{k=-r-1}^{r+1}\beta_kf_{j+k} \\
&\equiv& D_f(f)_j,
\end{array}
\end{equation}
where $\beta_k$ is a $2\times2$ matrix depending on the smoothness
indicators involving $f^+$ and $f^-$. Motivated by the research work in \cite{Xing2005}, the key idea of the current scheme is to apply the finite difference operator $D_f$, with the smoothness indicators and the coefficient matrix $\beta_k$ in (\ref{Df}) fixed, to approximate the source terms $\left(0,   \sqrt{A_0} \right)_x^T$ and $\left(0, A_0^{\frac32}\right)_x^T$. This leads to the splitting of the two derivatives as
\begin{equation}\label{source-splitting}
\left(
\begin{array}{c}
0 \\
\sqrt{A_0 }
\end{array}
\right)_x = \frac{1}{2}\left(
\begin{array}{c}
0 \\
\sqrt{A_0}
\end{array}
\right)_x + \frac{1}{2}\left(
\begin{array}{c}
0 \\
\sqrt{A_0}
\end{array}
\right)_x ,
\quad
 \left(
\begin{array}{c}
0 \\
A_0^{\frac32}
\end{array}
\right)_x = \frac{1}{2}\left(
\begin{array}{c}
0 \\
A_0^{\frac32}
\end{array}
\right)_x + \frac{1}{2}\left(
\begin{array}{c}
0 \\
A_0^{\frac32}
\end{array}
\right)_x,
\end{equation}
which is handled by applying the similar flux splitting WENO approximation procedure. The two parts of each source term are approximated by
the finite difference operator $D_f$ with coefficients obtained from the computation of $f^+(U)$ and $f^-(U)$, respectively.

A key observation is that the finite difference operator $D_f$ in (\ref{Df}), with the fixed coefficient matrix $\beta_k$, is a \emph{linear} finite difference operator on any grid function as in (\ref{linear-operator}). In addition, the finite difference operator $D_f$ is a high order accurate \emph{linear} approximation to the first derivative of a grid function. Therefore based on the Proposition \ref{proposition-1}, it may be proved that the WENO scheme with the modified flux
 splitting (\ref{flux-splitting-modified}) and with the special handling of
 the source terms described in (\ref{source-splitting}) maintains
 the well-balanced property. This leads to

\begin{proposition} \label{proposition-2}
The WENO-LF scheme for the blood flow model satisfying the steady state solutions (\ref{steady-1D}) can maintain the well-balanced property without adverse effect on its original high order accuracy.
\end{proposition}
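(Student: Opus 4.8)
The plan is to show that the semi-discrete right-hand side of the scheme vanishes identically on the state (\ref{steady-1D}), and then to check separately that neither of the two modifications lowers the formal order. I would treat the two equations of (\ref{eqn-1-modified}) in turn. For the mass equation $\partial_t A+\partial_x Q=0$ there is nothing to do: since $Q=Au\equiv 0$ at the steady state, every grid value $Q_{j+k}$ is zero, so any consistent finite difference approximation of $\partial_x Q$ returns zero and $\partial_t A_j=0$. All the work sits in the momentum equation, where a genuinely nonzero discrete flux gradient must cancel a genuinely nonzero discrete source exactly.

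The key mechanism is already assembled in (\ref{Df}): because the dissipation term of the modified splitting (\ref{flux-splitting-modified}) carries $\alpha U=\alpha(A-A_0,\,Q)^T$, at the steady state this vector is identically $(0,0)^T$, so the two ``$\alpha U$'' sums in (\ref{approximation-f}) drop out (this is exactly the cancellation verified just above (\ref{Df})) and the flux-gradient approximation collapses to the linear operator $f(U)_x|_{x_j}\approx D_f(f)_j=\sum_{k=-r-1}^{r+1}\beta_k f_{j+k}$ with the $2\times2$ matrices $\beta_k$ \emph{frozen}. I would then evaluate each piece on the steady state. With $Q=0$ and $A=A_0$ the flux reduces to $f=\bigl(0,\ \tfrac{k}{3\rho\sqrt{\pi}}A_0^{3/2}\bigr)^T$; the first source drops out because its prefactor $(A-A_0)$ vanishes pointwise, independently of how $\partial_x\sqrt{A_0}$ is discretized; and the second source is, by construction (\ref{source-splitting}), the \emph{same} operator $D_f$ applied to $\bigl(0,\ A_0^{3/2}\bigr)^T$ and scaled by $\tfrac{k}{3\rho\sqrt{\pi}}$. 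Linearity of $D_f$ together with the fact that the flux second component is exactly $\tfrac{k}{3\rho\sqrt{\pi}}$ times the source integrand $A_0^{3/2}$ then give $D_f(f)_j=\tfrac{k}{3\rho\sqrt{\pi}}D_f\bigl((0,A_0^{3/2})^T\bigr)_j$, which is precisely the discrete $S_2$. Hence flux gradient and source cancel, $\partial_t Q_j=0$, and since $D_f$ is a genuine linear operator in the sense of (\ref{linear-operator}), Proposition \ref{proposition-1} applies verbatim to deliver the well-balanced conclusion.

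For the accuracy claim I would argue that neither modification degrades the order. The splitting change only replaces $A$ by $A-A_0$ inside the dissipation, i.e. it subtracts the smooth, time-independent vector $\tfrac12\alpha(A_0,0)^T$ from $f^+$ and adds it back to $f^-$; since this shift is smooth it does not lower the order of either one-sided reconstruction, and because it cancels in the sum $\hat{f}^+_{j+1/2}+\hat{f}^-_{j+1/2}=\hat{f}_{j+1/2}$ the reconstructed flux still approximates $h_{j+1/2}$ to the design order. Likewise, $D_f$ is by construction a $(2r+1)$-th order accurate linear approximation to the first derivative of any smooth grid function, so applying it to $\sqrt{A_0}$ and $A_0^{3/2}$ discretizes the two source derivatives to the same order.

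I expect the delicate point to be the \emph{exact} matching of coefficients: the cancellation works only if the $\beta_k$ acting on the flux and on the source integrand are literally identical, not merely both high-order. This is not automatic, since the WENO smoothness indicators are not invariant under the constant rescaling relating $f_2$ and $A_0^{3/2}$ (the $\varepsilon_{_{\text{WENO}}}$ in (\ref{linearweight}) breaks scale invariance). The resolution I would stress is that the scheme does \emph{not} recompute weights for the source: it freezes the $\beta_k$ obtained from the flux computation and reuses them as a fixed linear operator in (\ref{source-splitting}). Once the coefficients are frozen, the proportionality $f_2=\tfrac{k}{3\rho\sqrt{\pi}}A_0^{3/2}$ factors cleanly through $D_f$ by linearity, and the nonlinearity of the weights becomes irrelevant to the balance.
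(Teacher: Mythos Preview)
Your proposal is correct and follows essentially the same route as the paper: reduce the flux-gradient approximation to the linear operator $D_f$ by observing that the modified dissipation term $\alpha(A-A_0,\,Q)^T$ vanishes at the steady state, reuse the same frozen $D_f$ on the source integrands, and invoke Proposition~\ref{proposition-1}. Your treatment is in fact more explicit than the paper's---in particular your emphasis that the $\beta_k$ must be \emph{frozen} (not recomputed for the source) is exactly the point the paper makes just below (\ref{Df}), and your remark about $\varepsilon_{_{\text{WENO}}}$ breaking scale invariance is a nice sharpening of why recomputation would fail. One minor quibble: the shift $\pm\tfrac12\alpha(A_0,0)^T$ does not literally ``cancel in the sum'' at the discrete level, since $\hat f^+$ and $\hat f^-$ are built with different weights and stencils; the correct (and sufficient) statement is simply that each one-sided reconstruction remains $(2r+1)$th-order on the shifted smooth data, which you also say.
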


%\begin{remark} In this paper, we use the Lax-Friedrichs flux splitting only due to its simplicity. Other flux splitting, such as a Roe flux splitting and Roe flux %splitting with entropy fix, can be applied and also lead to a well-balanced finite difference WENO scheme. In fact, a well-balanced scheme is designed herein under the %linear scheme framework (see Proposition \ref{proposition-1}) and any Riemann solver may be implemented to obtain a linear scheme.
%\end{remark}

For the temporal discretization, high order total variation diminishing (TVD) Runge-Kutta methods \cite{Shu1988}
can be used. In the numerical section of this paper, we apply the third order Runge-Kutta methods:
\begin{equation}  \label{RK3}
\begin{array} {lcl}
U^{(1)} &=& \displaystyle U^n + \Delta t \mathcal{F}(U^n),                                                    \\
U^{(2)} &=& \displaystyle \frac{3}{4}U^n + \frac{1}{4}\left(U^{(1)} + \Delta t \mathcal{F}(U^{(1)}) \right),   \\
U^{n+1} &=& \displaystyle \frac{1}{3}U^n + \frac{2}{3}\left(U^{(2)} + \Delta t \mathcal{F}(U^{(2)}) \right),
\end{array}
\end{equation}
with $\mathcal{F}(U)$ being the spatial operator.

\section{ Numerical results}   \label{the-results}

In this section, we carry out extensive numerical experiments inspired by Delestre \emph{et al.} \cite{Delestre2013} to demonstrate the
performances of a fifth-order ($r=2$) finite difference WENO scheme. The $CFL$ number is taken as $0.6$, except for the accuracy tests where smaller time step is taken to ensure that spatial errors dominate.

\subsection{The ideal tourniquet} \label{ideal}

This example is similar to the Stoke's dam break problem in shallow water equations \cite{Delestre2013-SWE}.
Herein, we consider the analogous problem in blood flow model: a tourniquet is applied and we remove it instantaneously. And we consider the following initial conditions
$$
A(x,0)=\left\{
\begin{array}{ll}
\pi R_{_L}^2 & \mbox{if} \;  x \leq 0, \\
\pi R_{_R}^2 & \mbox{otherwise},
\end{array}
\right.
\; \; \mbox{and} \; \;\; Q(x,0) = 0,
$$
on a computational domain $[-0.04, 0.04]$ based on the following parameters: $ k=1.\times10^{7} \, Pa/m, \, \rho = 1060 \,\, kg/m^{3}, \,
 R_{_L}=5\times10^{-3} \, m,  \, R_{_R}=  4\times10^{-3} \,m$.

 We solve this example on the mesh with $200$ cells up to $t=0.005$ s and present the numerical solutions against the exact ones in \figurename  \, \ref{f:42-WB}. It is clear that the numerical results fit well with the exact ones and keep steep shock transitions.

\begin{figure}
\centering
\includegraphics[width=3.2in]{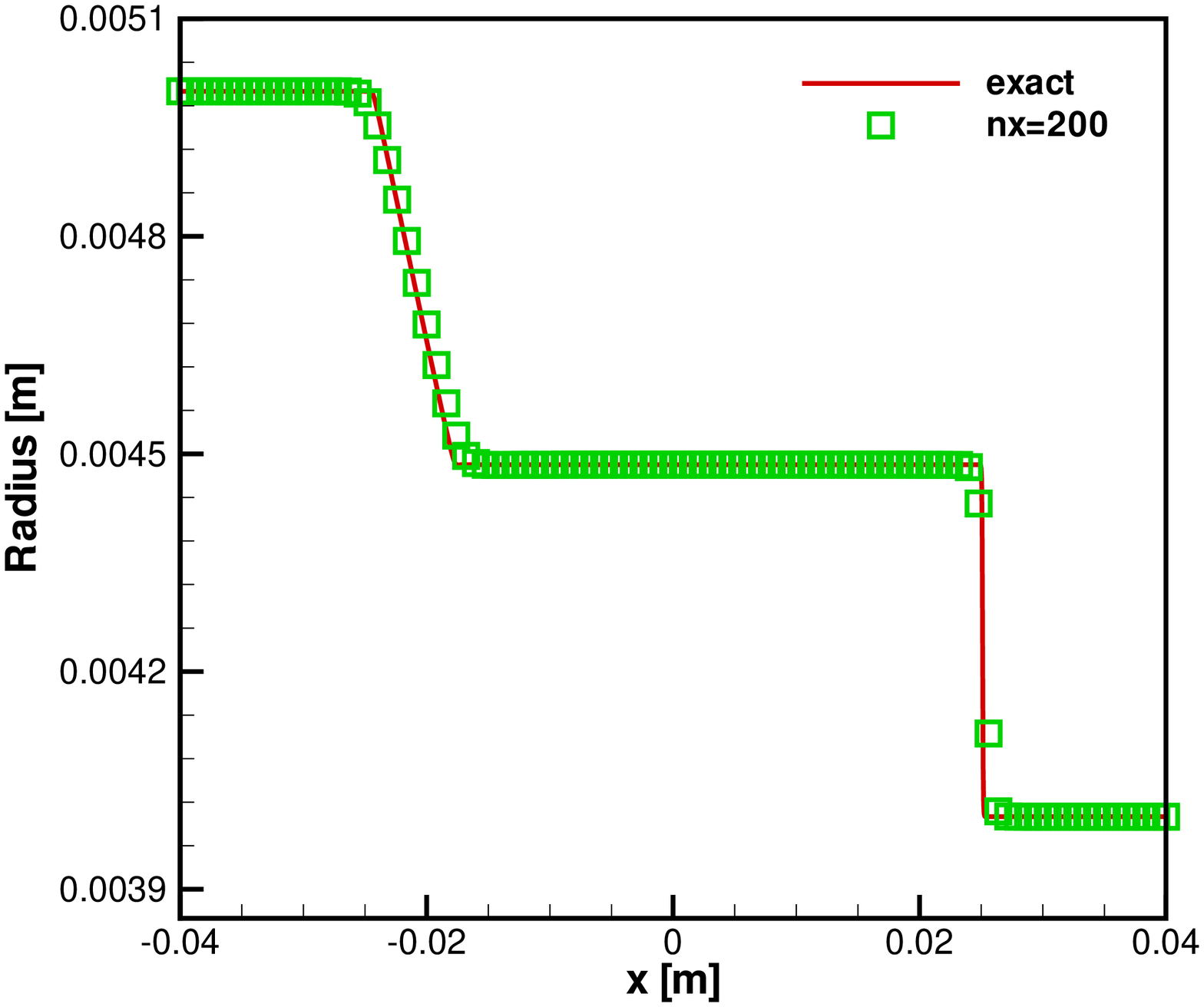}
\includegraphics[width=3.2in]{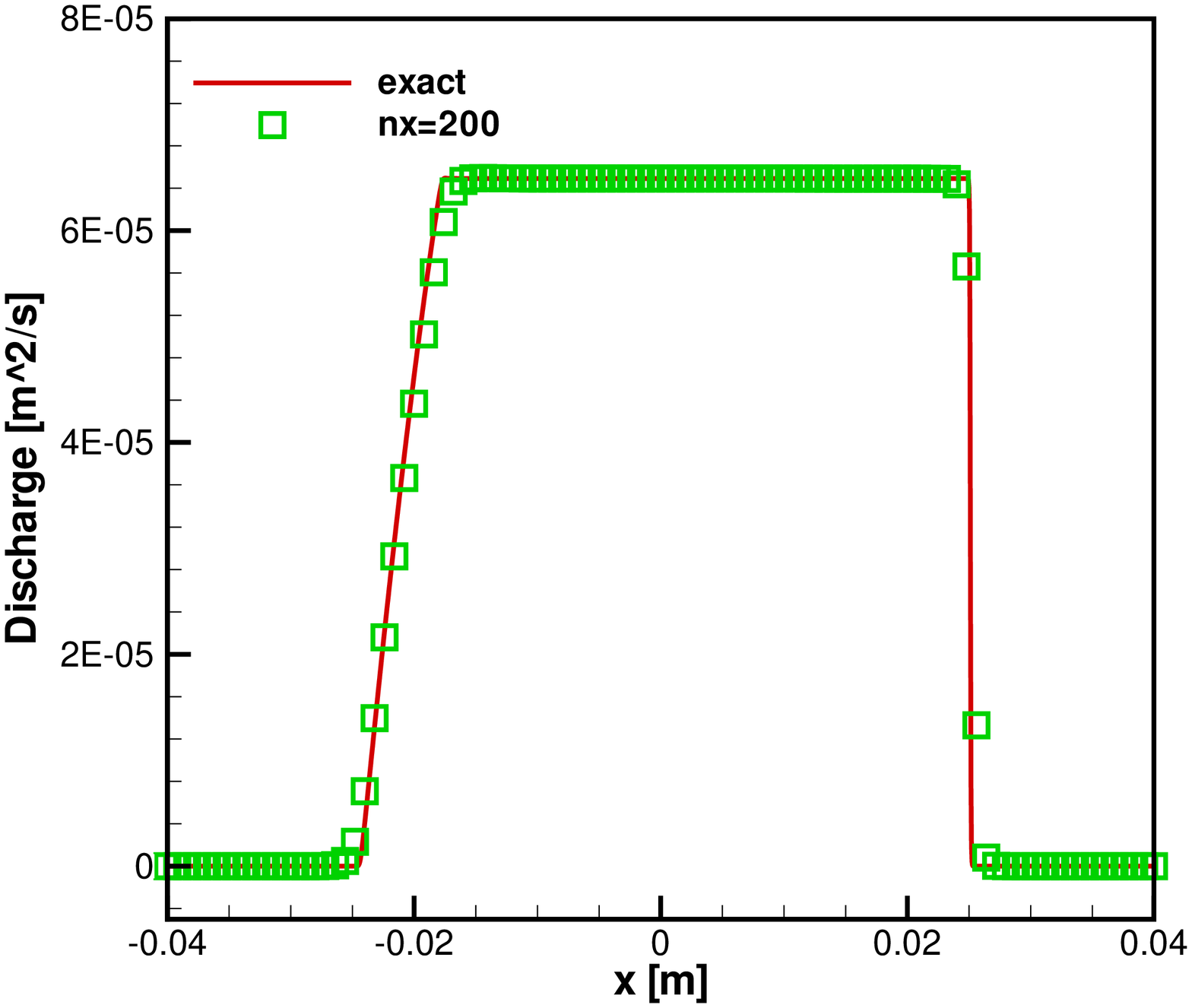}
\caption{ The numerical solutions of the ideal tourniquet problem in Section \ref{ideal} with $200$ cells at $t=0.005$ s. Radius (left) and discharge (right). } \label{f:42-WB}
\end{figure}

\subsection{ Wave equation }  \label{wave-eqn}

Then, the following quasi-stationary test case was proposed by Delestre \emph{et al.} \cite{Delestre2013}. It is chosen to demonstrate the
capability of the proposed scheme for computations on the perturbation of a steady state solutions. The initial conditions are given by
$$
A(x,0)=\left\{
\begin{array}{ll}
\pi R_{0}^{2} & \mbox{if} \;  x\in\left[0,\frac{40L}{100}\right] \cup \left[\frac{60L}{100},L\right], \\
\pi R_{0}^{2}\left[1 + \epsilon \sin\left(\pi\frac{x - 40L/100}{ 20L / 100}\right)\right]^{2} & \mbox{if} \;  x \in \left[ \frac{40L}{100}, \frac{60L}{100} \right],
\end{array}
\right.
\; \;
\mbox{and}
\; \;
Q(x,0) = 0,
$$
on the computational domain $[0, 0.16]$. The following parameters have been used for the example:  $ \epsilon = 5\times10^{-3}, \;   k = 10^{8} \, Pa/m, \;
\rho = 1060 \, kg/m^{3}, \; R_{0}=4\times10^{-3} \, m \, \mbox{and} \, L = 0.16 \, m.$

With the above initial conditions,  we obtain the following exact solutions:
$$
\left\{
\begin{array}{l}
R(x,t)=R_{0}+\frac{\epsilon}{2}\left[\Phi(x-c_{0}t)+\Phi(x+c_{0}t)\right],\\
u(x,t)=-\frac{\epsilon}{2}\frac{c_{0}}{R_{0}}\left[-\Phi(x-c_{0}t)+\Phi(x+c_{0}t)\right].
\end{array}
\right.
$$

We show the numerical solutions on a mesh with $200$ cells  at $t = 0.002 \,s,  \;  0.004 \, s, \; \mbox{and} \; 0.006\, s$, respectively in \figurename  \, \ref{f:43-WB}. The figure strongly suggests that the numerical solutions agree with the exact ones well.
Moreover, we also test the orders of the resulting scheme by plotting the numerical errors at $t=0.004 \, s$ and show the $L^1$ errors as well as order of accuracy for $A$ and $Q$  in Table \ref{t:order}. It is evident that the expected fifth order accuracy has been achieved.

\begin{figure}
\centering
%\flushmiddle
\includegraphics[width=3.2in]{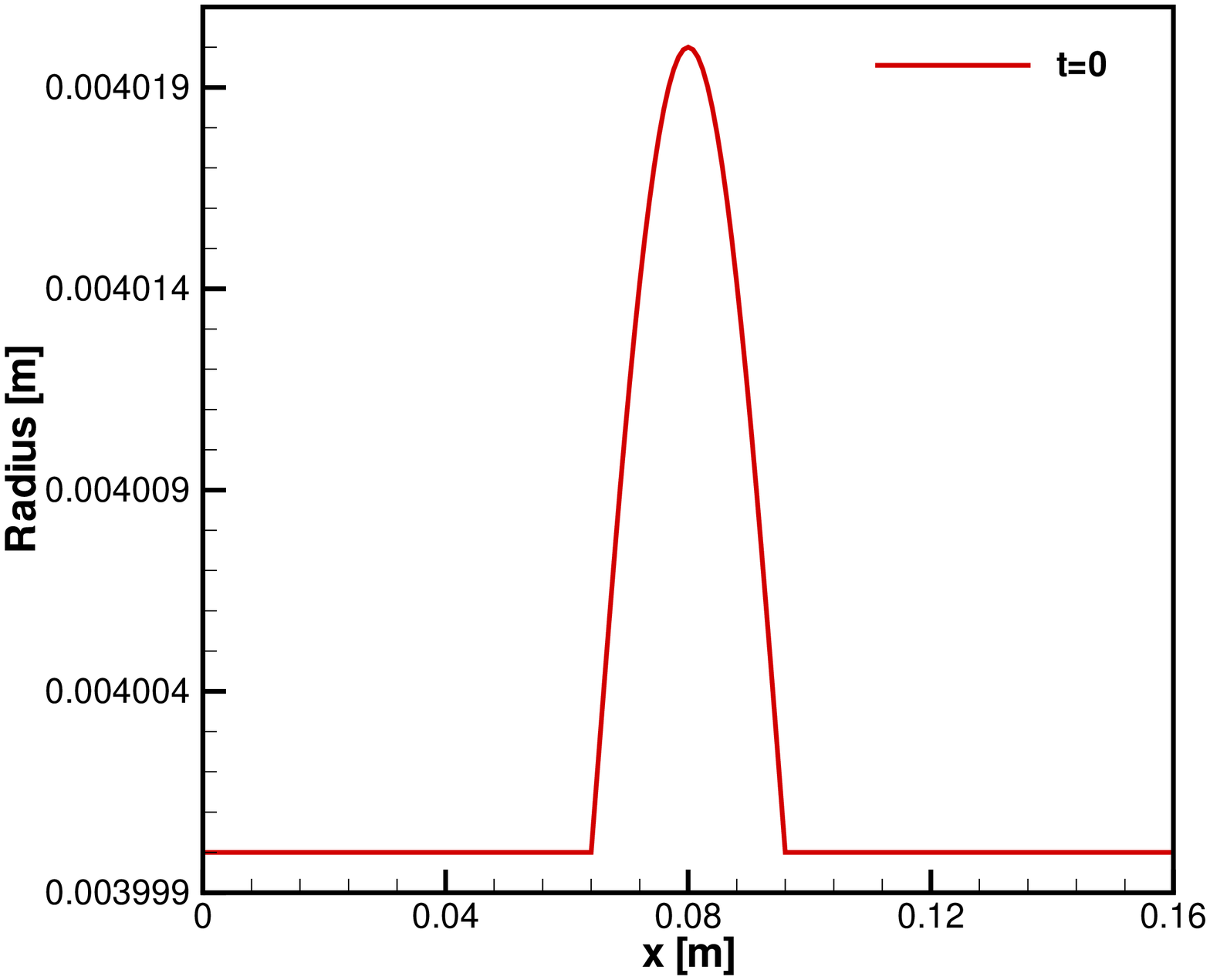}
\includegraphics[width=3.2in]{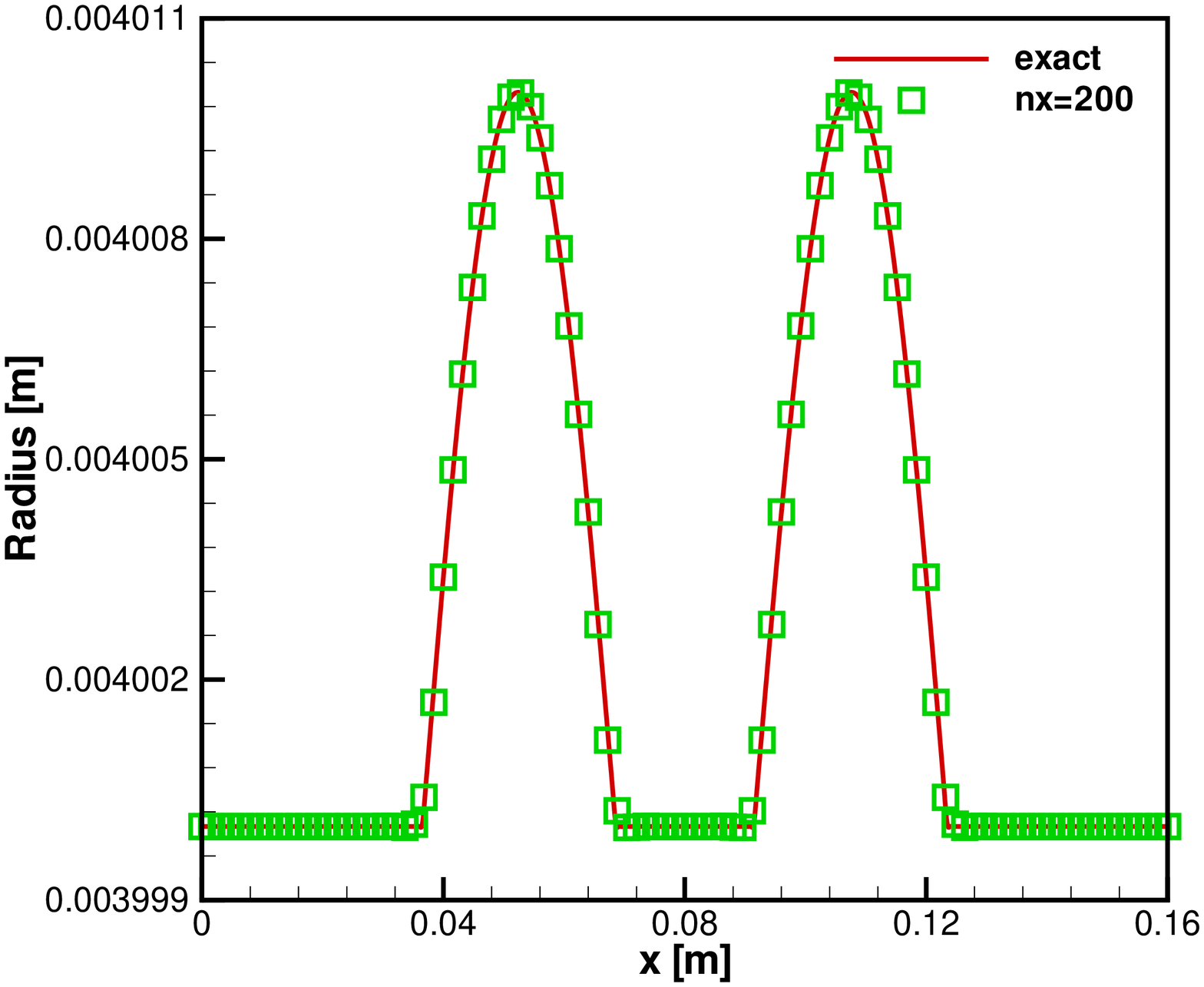}\\
\includegraphics[width=3.2in]{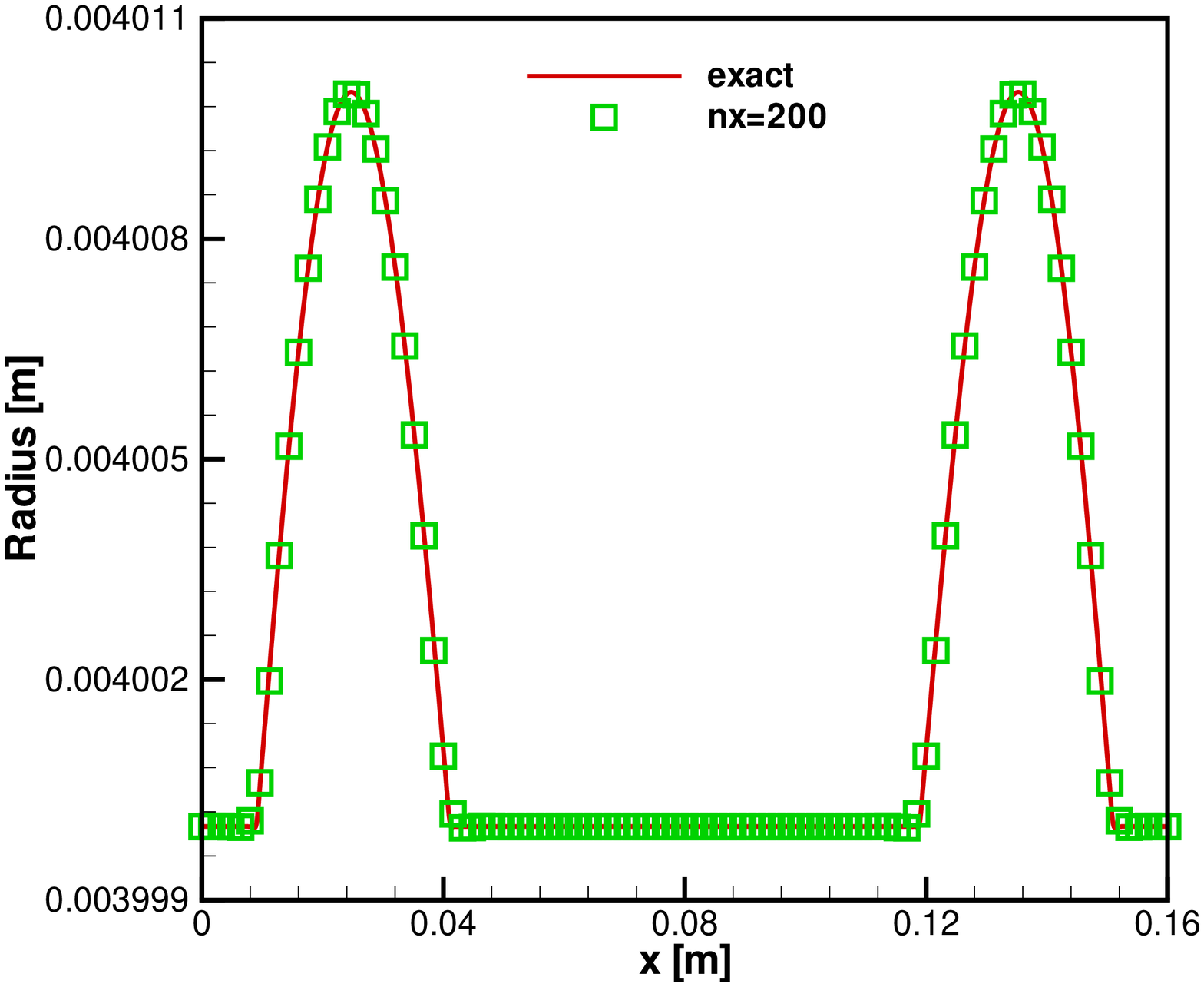}
\includegraphics[width=3.2in]{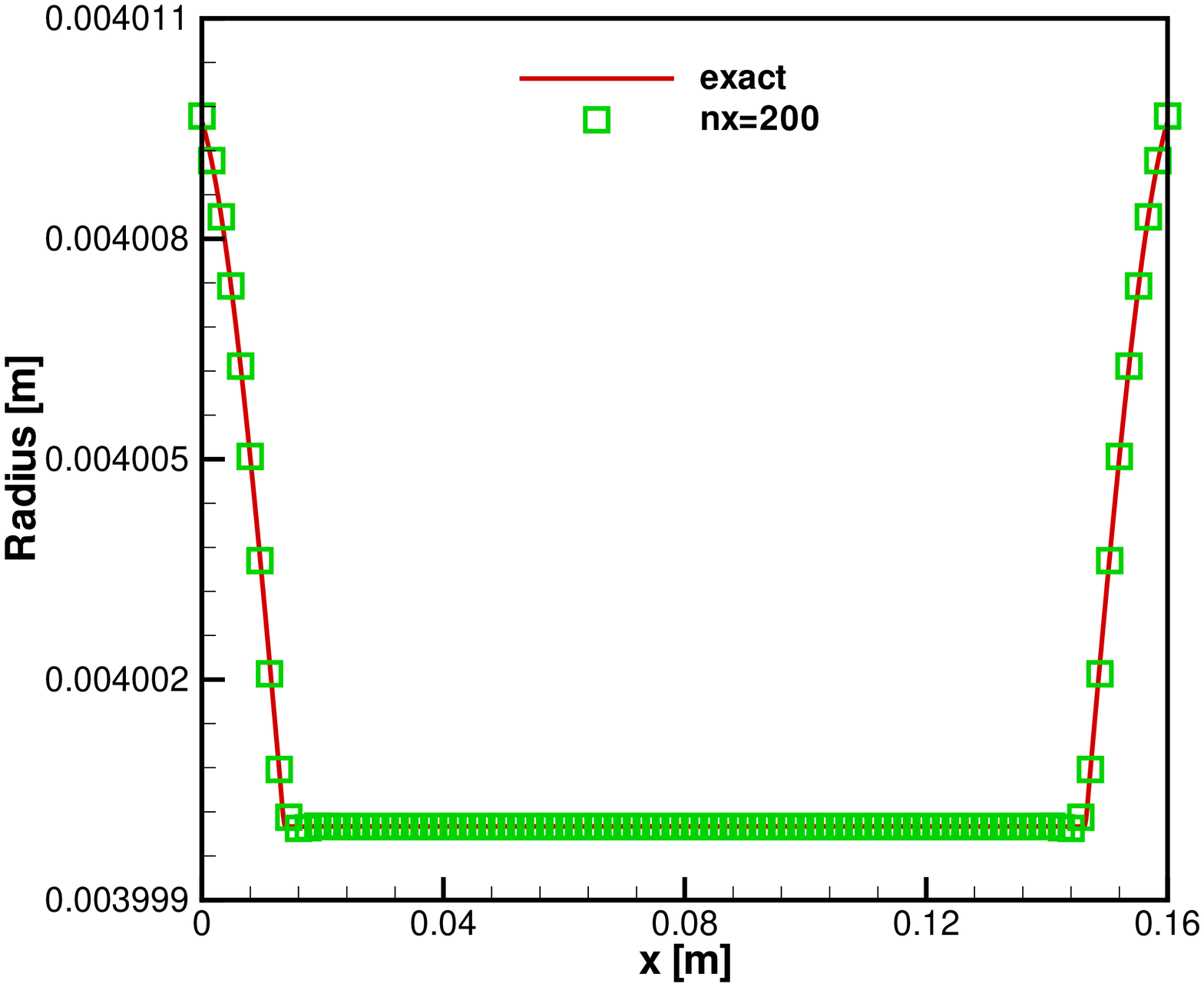}
\caption{ The numerical solutions of the wave equation problem in Section \ref{wave-eqn} with $ 200$ cells. Radius at time $t=0 \, s$ (upper left), $t=0.002 \, s$ (upper right), $t=0.004\, s$ (lower left), and $t=0.006 \, s$ (lower right), respectively. }  \label{f:43-WB}
\end{figure}

%\begin{table}
%\centering
%\caption{ $L^1$ errors and numerical orders of accuracy for the wave equation example of Section \ref{wave-eqn}.   } \label{t:order}
%\begin{tabular}{|l|*{4}{r|}}  \hline
%&\multicolumn{2}{c|}{$A$}&\multicolumn{2}{c|}{$Q$} \\\cline{2-5}
% \rule{0mm}{1.1em}\raisebox{3ex}[0ex]{N}&
%\multicolumn{1}{c|}{$L^{1} \; \mbox{error}$} &
%\multicolumn{1}{c|}{Order} & \multicolumn{1}{c|}{$L^{1} \;
%\mbox{error}$} & \multicolumn{1}{c|}{Order}  \\\hline
%    25 &   1.7566E-02 &          &   1.0990E-01 &            \\ \hline
%    50 &   2.2028E-03 &     3.00 &   1.9714E-02 & 2.48       \\ \hline
%   100 &   3.3138E-04 &     2.73 &   2.8273E-03 & 2.80       \\ \hline
%   200 &   2.3271E-05 &     3.83 &   2.0103E-04 & 3.81       \\ \hline
%   400 &   9.3899E-07 &     4.63 &   8.7320E-06 & 4.52       \\ \hline
%   800 &   3.1516E-08 &     4.90 &   3.7319E-07 & 4.55       \\ \hline
%  1600 &   9.1264E-10 &     5.11 &   1.1501E-08 & 5.02       \\ \hline
%\end{tabular}
%\end{table}

\begin{table}
\centering
\caption{ $L^1$ errors and numerical orders of accuracy for the wave equation example of Section \ref{wave-eqn}.   } \label{t:order}
\begin{tabular}{l*{4}{r}}  \hline
&\multicolumn{2}{c}{$A$}&\multicolumn{2}{c}{$Q$} \\   \cline{2-5}
 \rule{0mm}{1.1em}\raisebox{3ex}[0ex]{N}& \multicolumn{1}{c}{$L^{1} \; \mbox{error}$} & \multicolumn{1}{c}{Order} & \multicolumn{1}{c}{$L^{1} \;
\mbox{error}$} & \multicolumn{1}{c}{Order}  \\\hline
    25 &   1.7566E-02 &          &   1.0990E-01 &            \\
    50 &   2.2028E-03 &     3.00 &   1.9714E-02 & 2.48       \\
   100 &   3.3138E-04 &     2.73 &   2.8273E-03 & 2.80       \\
   200 &   2.3271E-05 &     3.83 &   2.0103E-04 & 3.81       \\
   400 &   9.3899E-07 &     4.63 &   8.7320E-06 & 4.52       \\
   800 &   3.1516E-08 &     4.90 &   3.7319E-07 & 4.55       \\
  1600 &   9.1264E-10 &     5.11 &   1.1501E-08 & 5.02       \\ \hline
\end{tabular}
\end{table}

\subsection{The man at eternal rest}  \label{WB}

The purpose of this example is to verify that the scheme indeed maintains the well-balanced property.

Herein, we consider a configuration with no flow and with a change of radius $R_{0}(x)$, this is the case for
a dead man with an aneurism. Thus, for the initial conditions, the section of the artery is not constant with the following form
$$
R(x,0)=R_{0}(x)=\left\{
  \begin{array}{ll}
   \tilde{R}                                                                           & \mbox{if} \; x \in [0,x_{1}]\cup[x_{4},L],\\
   \tilde{R} + \frac{\Delta R}{2}\left[\sin\left(\frac{x-x_{1}}{x_{2}-x_{1}}\pi-\frac{\pi}{2}\right)+1\right]  & \mbox{if} \; x \in [x_{1},x_{2}],\\
   \tilde{R} + \Delta R                                                                 & \mbox{if} \; x \in [x_{2},x_{3}],\\
   \tilde{R} + \frac{\Delta R}{2}\left[\cos\left(\frac{x-x_{3}}{x_{4}-x_{3}}\pi\right)+1\right]                & \mbox{if} \; x \in [x_{3},x_{4}],
   \end{array}
  \right.$$
on the computational domain $[0, L]$ with $ \tilde{R} = 4.\times10^{-3}\,  m, \Delta R=10^{-3} \, m, \, k=10^{8} \, Pa/m, \, \rho=1060\, kg/m^{3}, \,
L = 0.14 \, m, \, x_{1}= 10^{-2} \, m, \, x_{2}=3.05\times10^{-2} \, m, \, x_{3}=4.95\times10^{-2} \,  m  \,\, \mbox{and}  \,\, x_{4}=7.\times10^{-2}  \, m$.
In addition, the initial velocity is zero.  We compute this example up to $t = 5 \, s$.

In order to show that the well-balanced property is maintained  up to machine round off error,
tests are run using single, double and quadruple precisions, respectively. The $L^1$ and $L^{\infty}$ errors calculated for $A$ and $Q$ are presented in Table \ref{t:well-balanced}. It can be clearly seen that the $L^1$ and $L^{\infty}$
errors are all at the level of round off error associated with different precisions, which verify that the current scheme maintains
the well-balanced property as expected.

In \figurename  \, \ref{f:44-WB}, we present the radius at $ t = 5 \, s$ on a mesh with $ 200$
cells against a reference solution obtained with a much refined $2000$ cells. In addition, we run the same numerical test
using the non-well-balanced WENO schemes, with a straightforward integration of the source
term, and show their results in \figurename  \, \ref{f:44-WB} for comparison. It is obvious that the results of the well-balanced
WENO scheme are in good agreement with the reference solutions for the case, while the non-well-balanced WENO scheme fails to capture the small perturbation with $200$ cells.

\begin{table}
\centering
\caption{$L^1$ and $L^{\infty}$ errors for different precisions for the man at eternal rest.}  \label{t:well-balanced}
\begin{tabular}{l*{4}{r}}\hline
& \multicolumn{2}{c}{$L^1  \; \mbox{error}$}&\multicolumn{2}{c}{$L^{\infty}  \; \mbox{error}$} \\\cline{2-5}
\rule{0mm}{1.1em}\raisebox{3ex}[0ex]{Precision}&
\multicolumn{1}{c}{$A$} & \multicolumn{1}{c}{$Q$}   & \multicolumn{1}{c}{$A$} & \multicolumn{1}{c}{$Q$}   \\ \hline
Single    & 3.47e-07 & 3.13e-07   & 3.54e-07 & 3.25e-07     \\
Double    & 2.72e-16 & 4.34e-16   & 2.11e-15 & 3.14e-16     \\
Quadruple & 2.31e-33 & 4.34e-32   & 1.28e-33 & 2.44e-31     \\ \cline{1-5}
\end{tabular}
\end{table}

\begin{figure}
\centering
\includegraphics[width=4in]{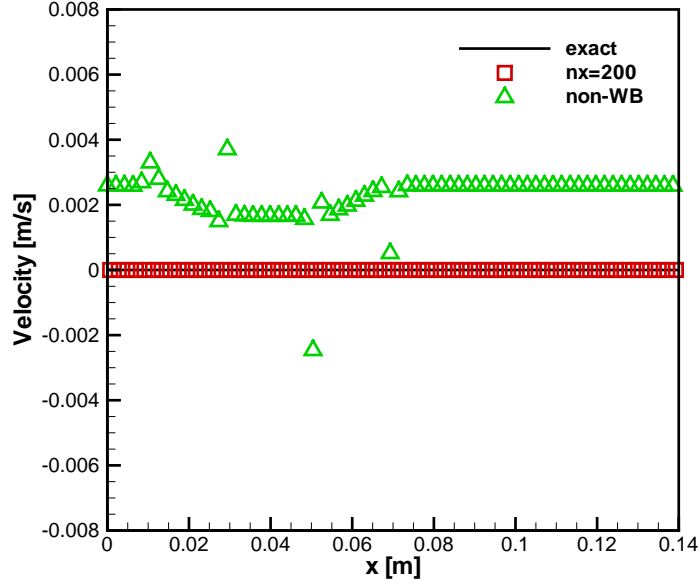}
\caption{ The man at eternal rest problem in Section \ref{WB} with $200$ cells at $t=5 \, s$. The result of the
well-balanced scheme with $200$ and $2000$ cells, and that of the non-well-balanced (denoted
by non-WB) scheme with $200$ cells. } \label{f:44-WB}
\end{figure}

\subsection{Propagation of a pulse to an expansion} \label{to-pulse}

Firstly, we test the case of a pulse in a section
$R_{_R}$ passing through an expansion: $A_{L}>A_{R}$, taking the following parameters: $k=1.0\times10^{8} \, Pa/m, \, L = 0.16 \, m,  \rho=1060 \, kg/m^{3}, \, R_{_L}=5\times10^{-3} \, m, \, R_{_R}=4\times10^{-3} \, m, \,
\Delta R=1.0\times10^{-3} \, m.$  We take a decreasing shape on a rather small scale:
$$
  R_{0}(x)=\left\{
  \begin{array}{ll}
   R_{_R}+ \Delta R  & \mbox{if} \; x \in [0,x_{1}],\\
   R_{_R}+ \frac{\Delta R}{2}\left[1+\cos\left(\frac{x-x_{1}}{x_{2}-x_{1}}\pi\right)\right] & \mbox{if} \; x \in [x_{1},x_{2}],\\
   R_{_R} &  \mbox{else},
   \end{array}
  \right.
$$
with $x_{1}=\frac{19L}{40}, \; x_{2}=\frac{L}{2}$. As initial conditions, we consider a fluid at rest $Q(x,0)=0 \, m^3/s$
and the following perturbation of radius:
$$
R(x,0)=\left\{
\begin{array}{ll}
  R_{0}(x)\left[1 + \epsilon \sin\left(\frac{100}{20L}\pi(x-\frac{65L}{100})\right)\right] & \mbox{if} \; x \in \left[\frac{65L}{100}, \frac{85L}{100}\right],\\
  R_{0}(x) & \mbox{else},
   \end{array}
  \right.$$
with $\epsilon=5.0\times10^{-3}$.

In \figurename  \, \ref{f:to-pulse}, we present the numerical results against the reference solutions at $t=0.002 \, s$ and $t=0.006 \, s$.
The numerical solutions are in good agreement with the reference ones and are comparable with those in \cite{Delestre2013}.

\begin{figure}
\centering
\includegraphics[width=3.2in]{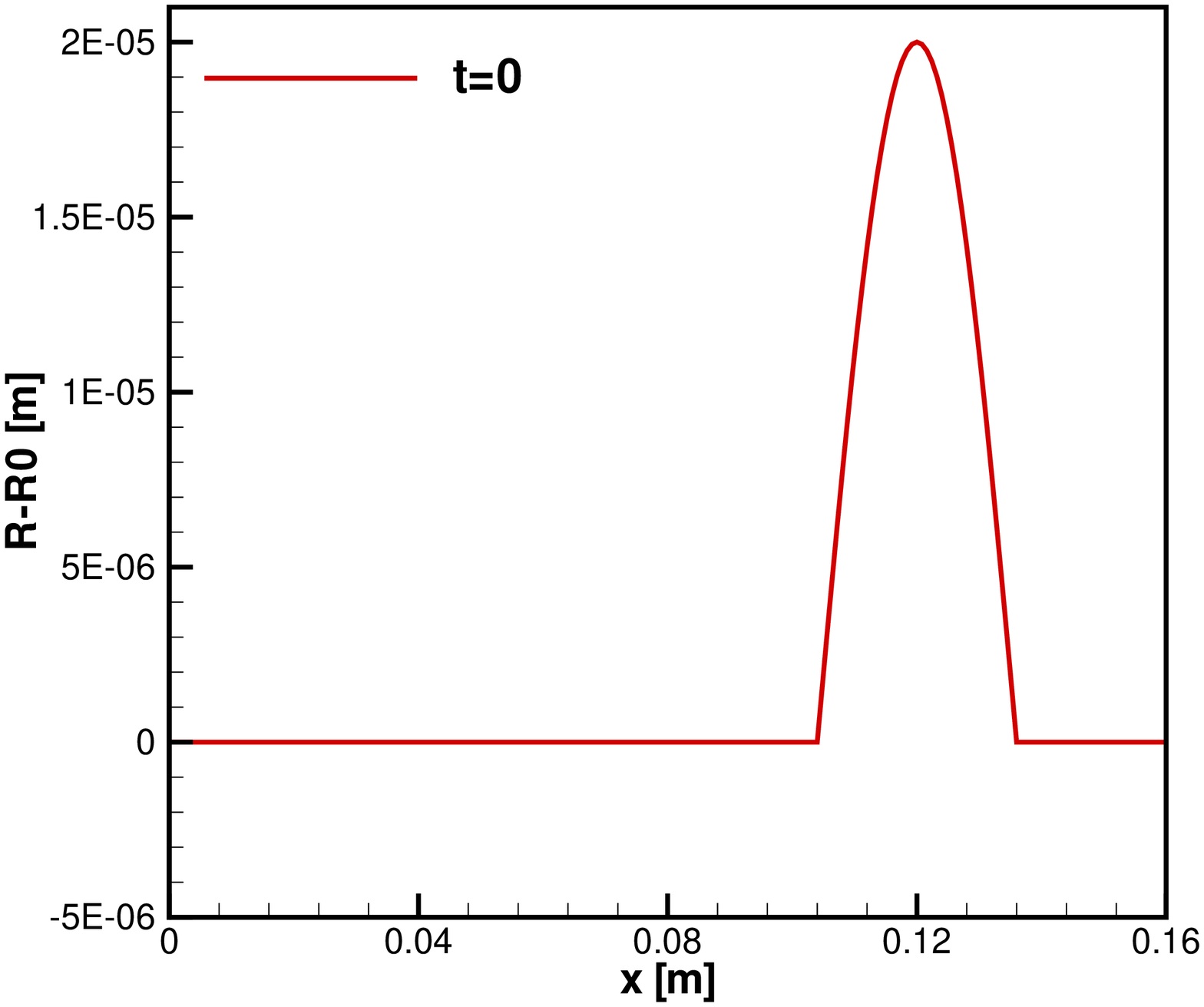}\\
\includegraphics[width=3.2in]{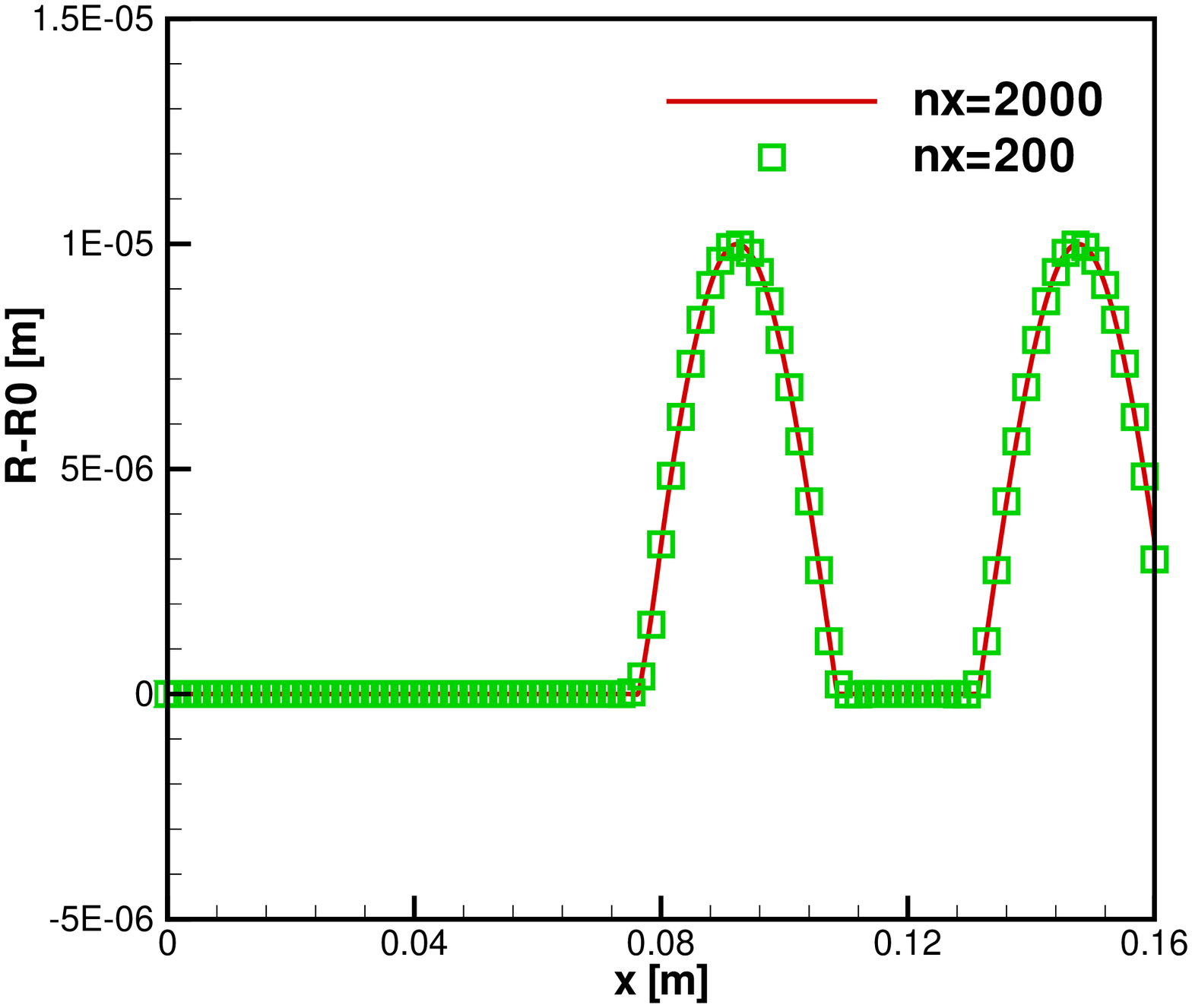}
\includegraphics[width=3.2in]{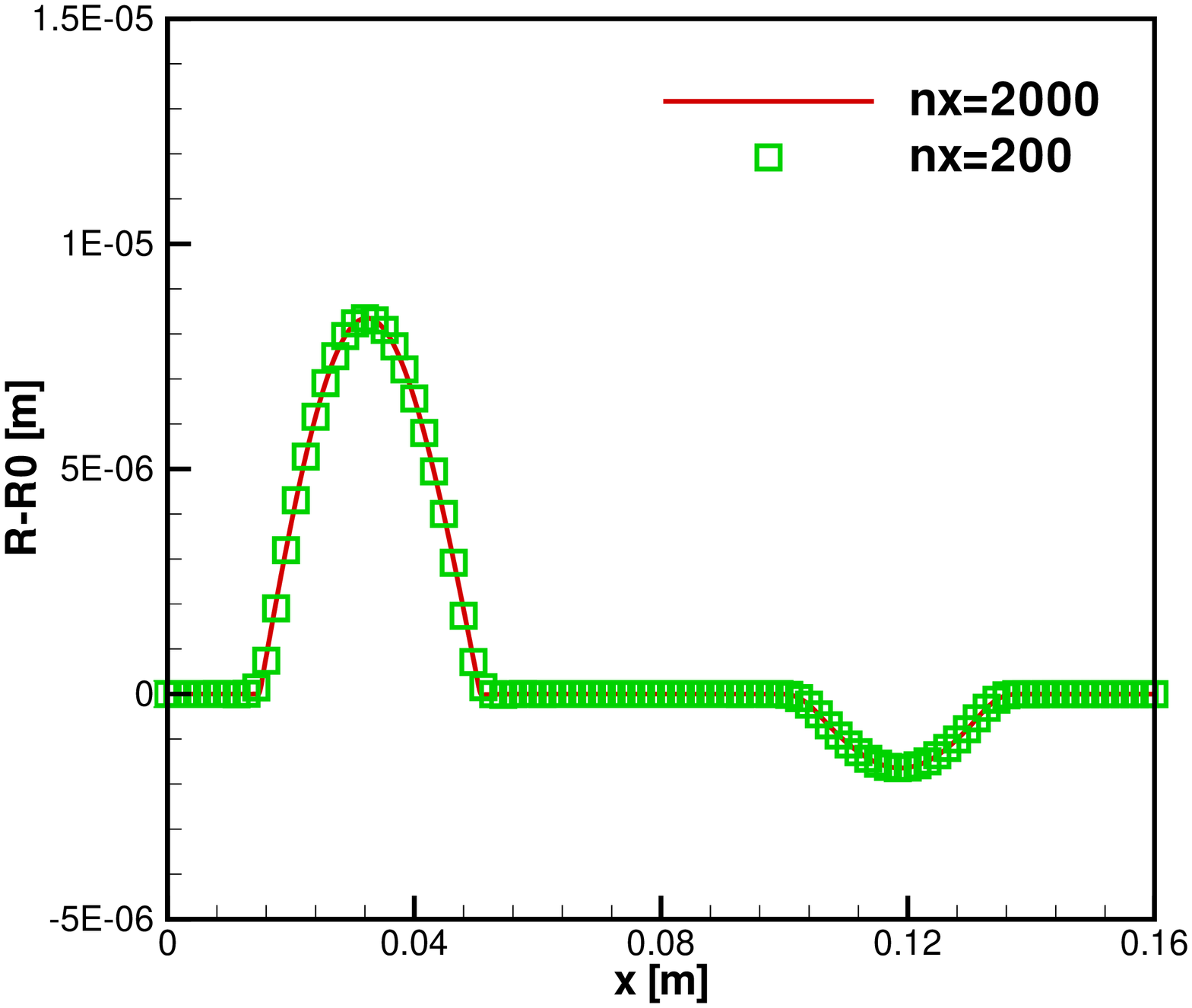}
\caption{ The numerical solutions of the propagation of a pulse to an expansion in Section \ref{to-pulse} with $ 200$ cells.
The errors $R - R_0$ at $t=0 \, s$ (upper), \, $t=0.002 \, s$ (lower left)  \, and \, $t=0.006\, s$  (lower right).  } \label{f:to-pulse}
\end{figure}

\subsection{Propagation of a pulse from an expansion} \label{from-pulse}

Then, we consider a pulse propagating from an expansion. So, the parameters are the same as in the Section \ref{to-pulse},
only the initial radius is changed:
$$
R(x,0)=
\left\{
\begin{array}{ll}
  R_{0}(x)\left[1+ \epsilon \sin\left(\frac{100}{20L}\pi\left(x-\frac{15L}{100}\right)\right)\right]  & \mbox{if} \;  x \in \left[\frac{15L}{100}, \frac{35L}{100}\right],\\
  R_{0}(x)  & \mbox{ else},
   \end{array}
  \right.$$
with $ \epsilon=5.0\times10^{-3}$.

In \figurename  \, \ref{f:from-pulse}, we demonstrate the numerical results against the reference solutions at $t=0.002 \, s$ and $t=0.006 \, s$.
Similar, the numerical solutions fit well with the reference ones and are comparable with those in \cite{Delestre2013}.

\begin{figure}
\centering
\includegraphics[width=3.2in]{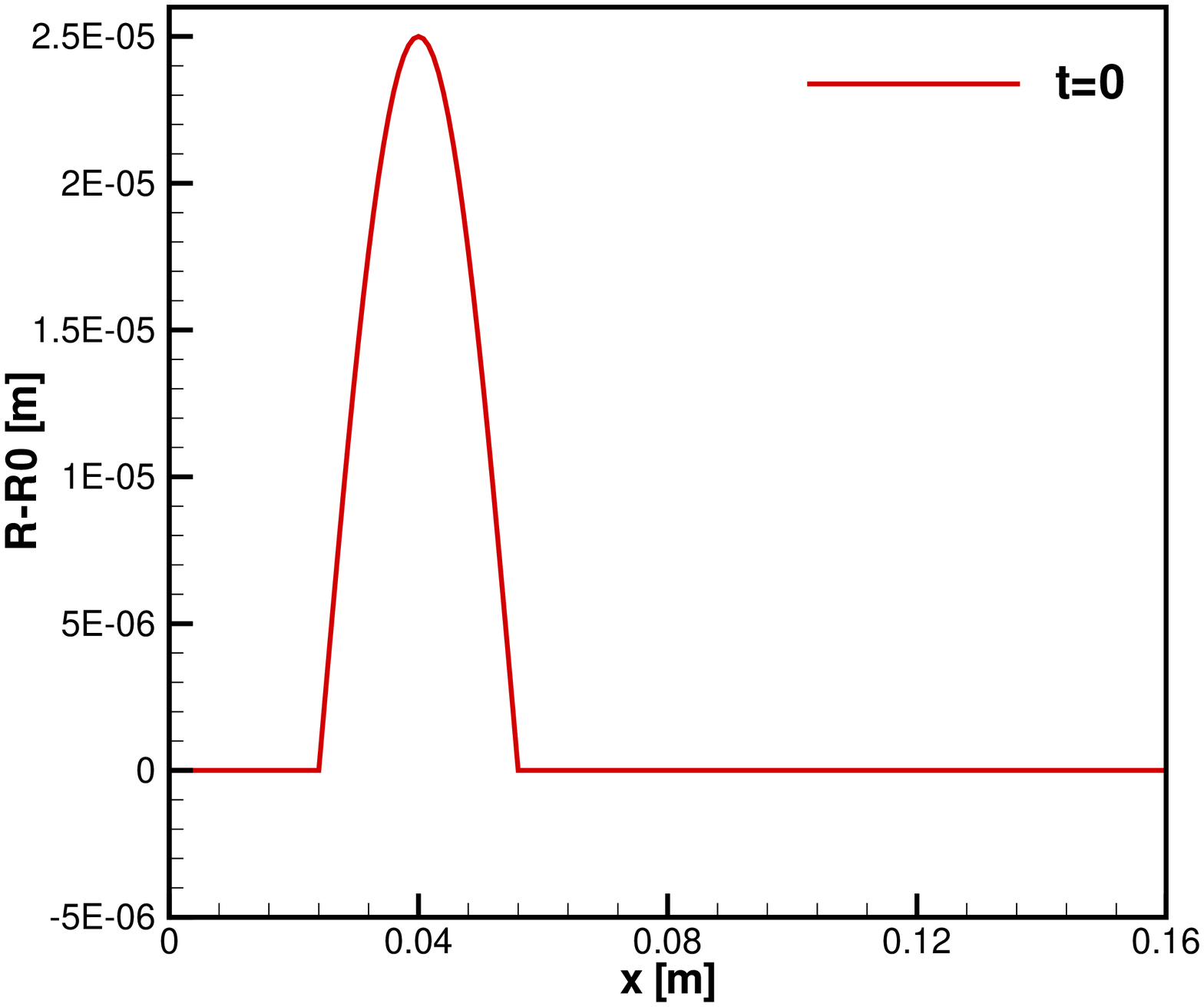} \\
\includegraphics[width=3.2in]{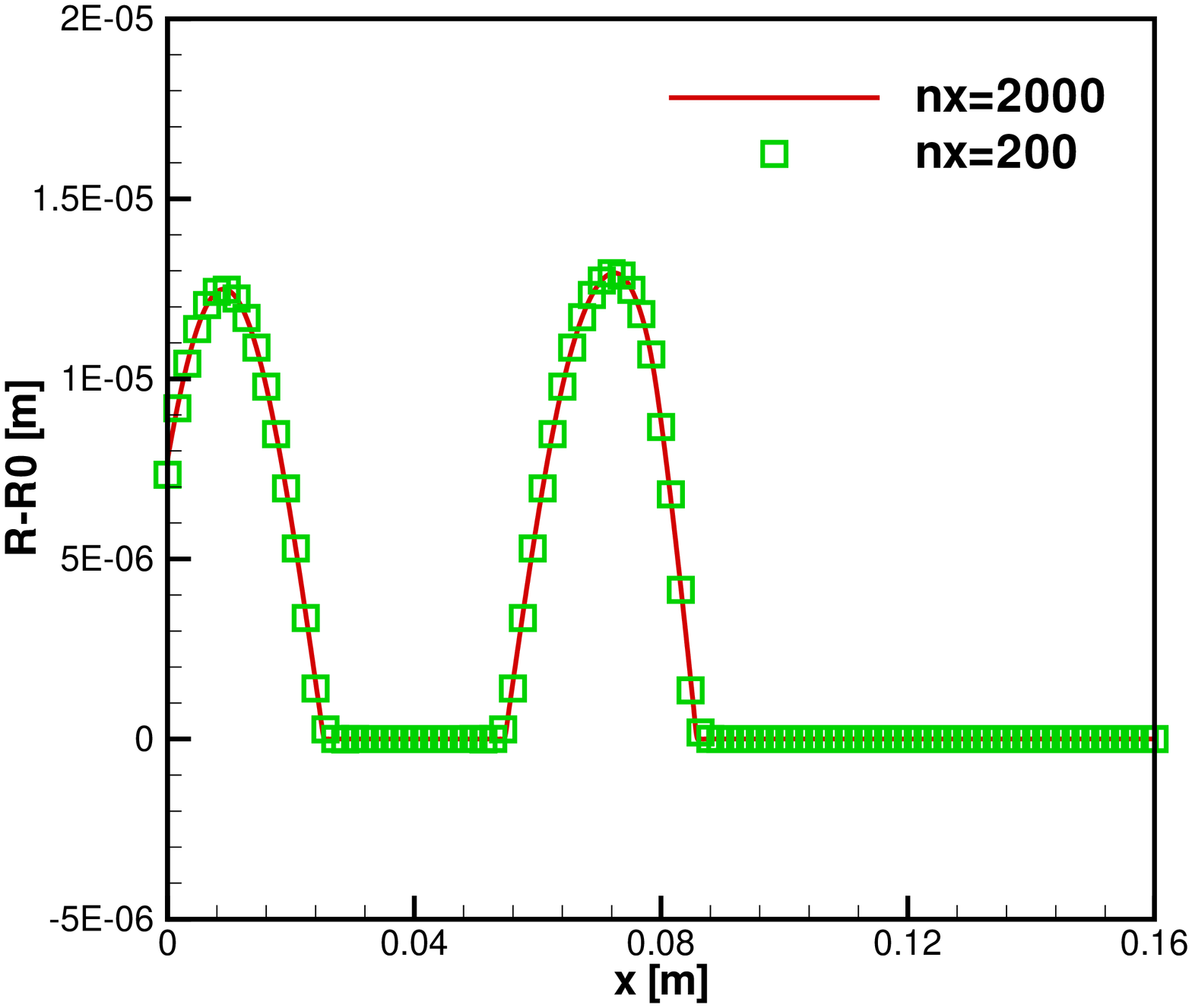}
\includegraphics[width=3.2in]{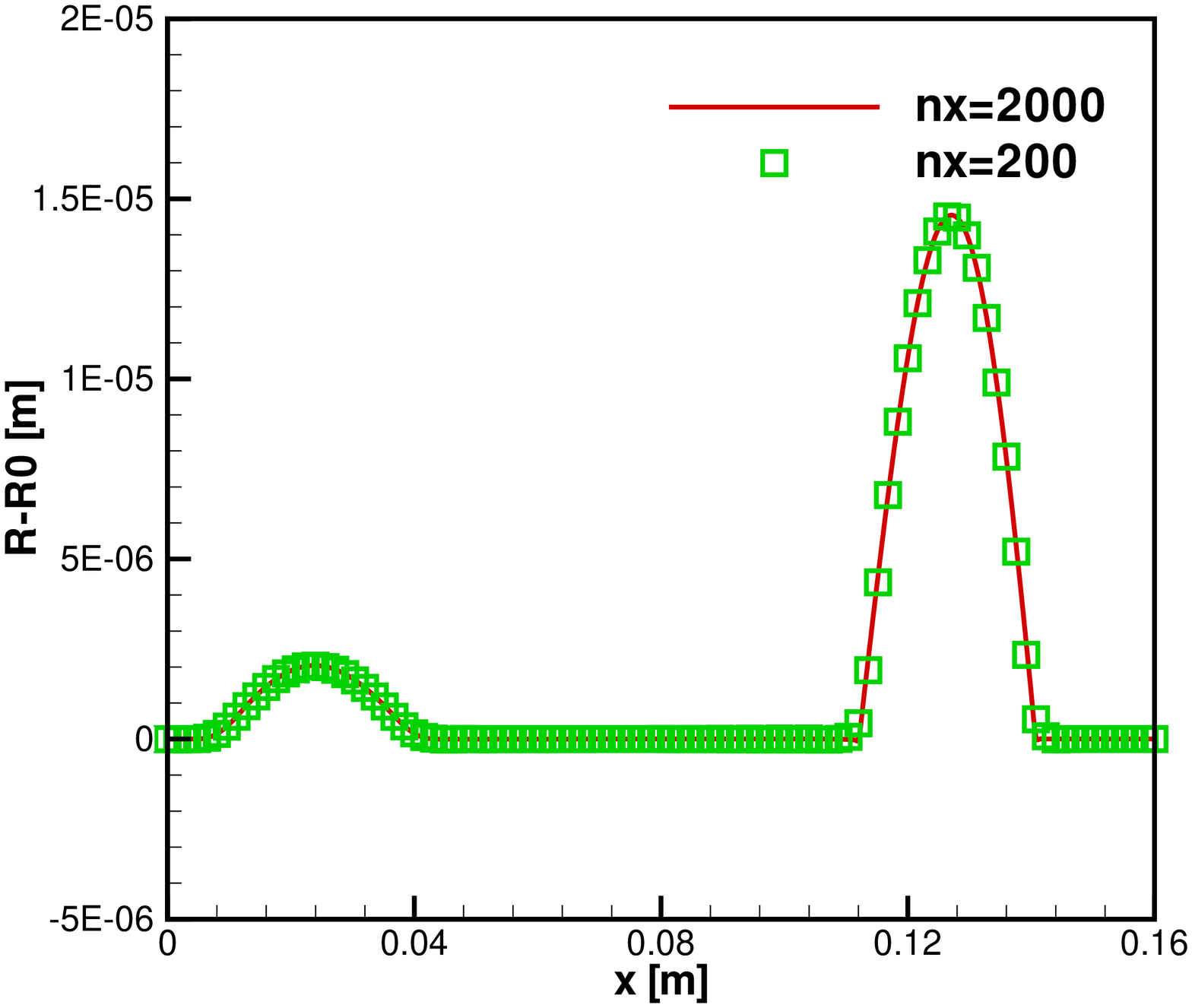}
\caption{ The numerical solutions of the propagation of a pulse from an expansion in Section \ref{from-pulse} with $ 200$ cells.
The errors $R - R_0$ at $t=0 \, s$  (upper), \, $t=0.002 \, s$ (lower left)  \, and \, $t=0.006\, s$ (lower right).    }   \label{f:from-pulse}
\end{figure}

\subsection{Wave damping}   \label{wave-damping}
In this last test case, we look at the viscous damping term in the linearized momentum equation. This
is the analogue of the Womersley \cite{Womersley1955} problem, we consider a periodic signal at the inflow with a constant section at rest.
 We consider the following model coupled with the linear friction term
\begin{equation}
\left\{
\begin{array}{l}
\partial_{t}A+\partial_{x}Q=0,\\
\partial_{t}Q+\partial_{x}\left(\frac{Q^{2}}{A}+\frac{k}{3\rho \sqrt{\pi} } A^{\frac{3}{2}}  \right) =
\frac{kA}{\rho \sqrt{\pi}}\partial_{x} ( \sqrt{ A_{0} } ) - C_f \frac{Q}{A},
\end{array}
\right.
\end{equation}
where $C_f = 8 \pi \nu$ with $\nu$ being the blood viscosity. We consider this example on the computational domain $[0, 3]$ subject to the given initial conditions
$$
\left\{
\begin{array}  {l}
A(x, 0) = \pi R_0^2, \\
Q(x,0)  = 0,
\end{array}
\right.
$$
companied by the following parameters: $k = 1 \times 10^8 \, Pa/m, \, \rho = 1060 \, kg/m^3, \, R_0 = 4 \times 10^-3 \, m. $ We solve this example up to $t= 25 \, s$.

Subsequently, we obtain a damping wave in the domain \cite{Delestre2013}
  \begin{equation} \label{model8}
  Q(t,x)=\left\{
  \begin{array}{ll}
  0                                             & \mbox{if} \; k_{r}x >    \omega t,\\
  Q_{_{\text{amp}}}\sin(\omega t-k_{r}x)e^{k_{i}x} & \mbox{if} \; k_{r}x \leq \omega t,
   \end{array}
  \right.
\end{equation}
with
$$
  \begin{array}{lcl}
k_{r} &=&  \left[\frac{\omega^{4}}{c_{0}^{4}} + \left(\frac{\omega C_{f}}{\pi R_{0}^{2}c_{0}^{2}}\right)^{2}\right]^{\frac{1}{4}}\cos\left(\frac{1}{2}\arctan\left(-\frac{C_{f}}{\pi R_{0}^{2}\omega}\right)\right),  \\
k_{i} &=& \left[\frac{\omega^{4}}{c_{0}^{4}} + \left(\frac{\omega C_{f}}{\pi R_{0}^{2}c_{0}^{2}}\right)^{2}\right]^{\frac{1}{4}}\sin\left(\frac{1}{2}\arctan\left(-\frac{C_{f}}{\pi R_{0}^{2}\omega}\right)\right), \\
\omega &=& 2 \pi/T_{_\text{pulse}}  = 2 \pi/0.5 \, s,  \\
c_0   &=& \sqrt{\frac{k \sqrt{A_0}}{2 \rho \sqrt{\pi}}} = \sqrt{\frac{k R_0}{2 \rho }}.
   \end{array}
$$

 For the boundary conditions, we impose the incoming discharge
  $$Q_{b}(t)=Q_{_{\text{amp}}}\sin(\omega  t) \, m^{3}/s,$$
 at $x=0 \, m$ with $Q_{_\text{amp}} = 3.45 \times 10^{-7} \, m^3/s^3$ being the amplitude of the inflow discharge.
 As the flow is subcritical, the discharge is imposed at the outflow boundary, thanks to (\ref{model8}) at the right boundary $ x= 3 \, m$.

In \figurename  \, \ref{f:damping}, we present the numerical results against the exact solutions at $t=25\, s$ with different $C_f$.
It is obvious that the numerical solutions are in good agreement with the exact solutions and are comparable with those in \cite{Delestre2013}.

\begin{figure}
\centering
\includegraphics[width=3.1in]{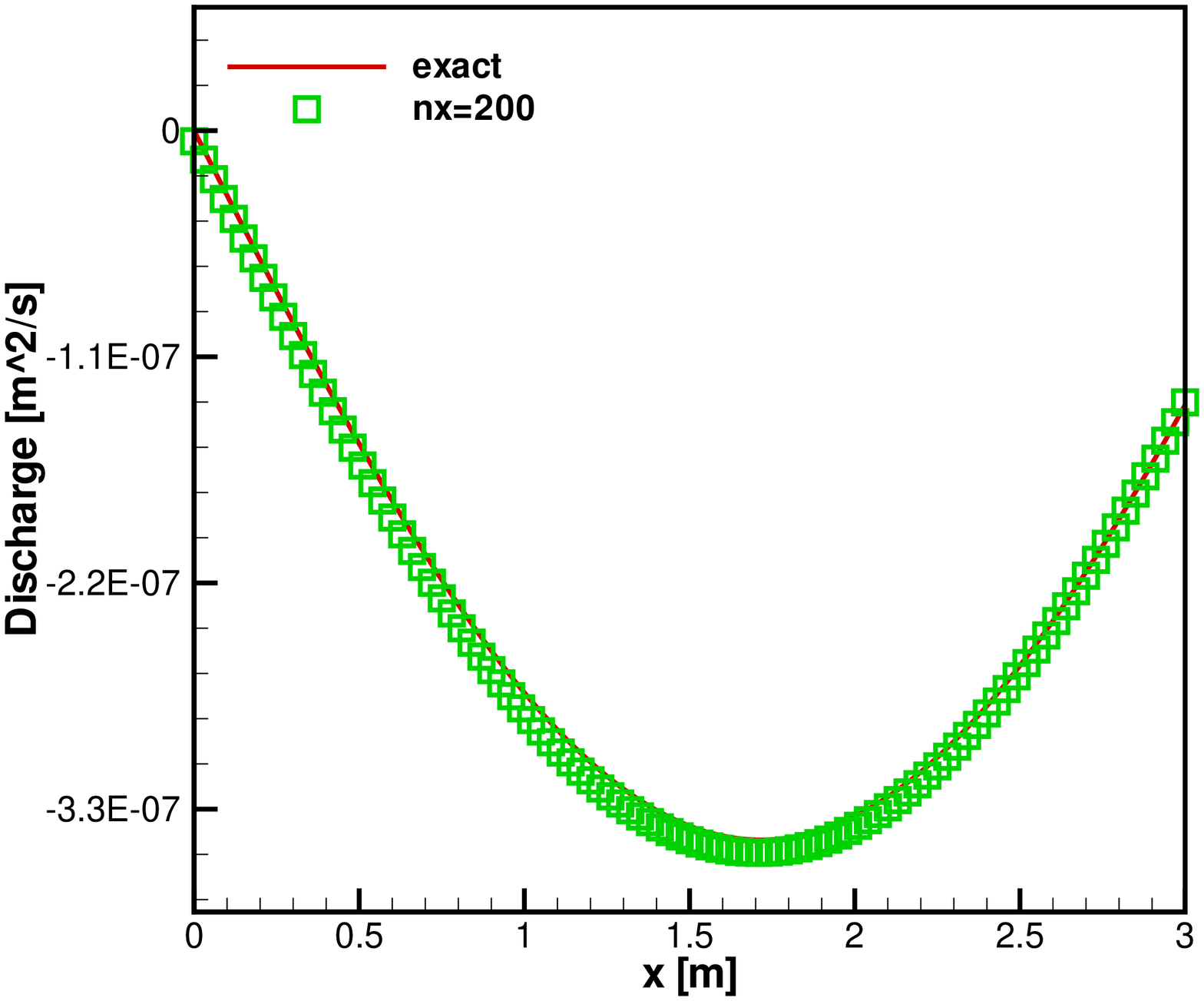}
\includegraphics[width=3.1in]{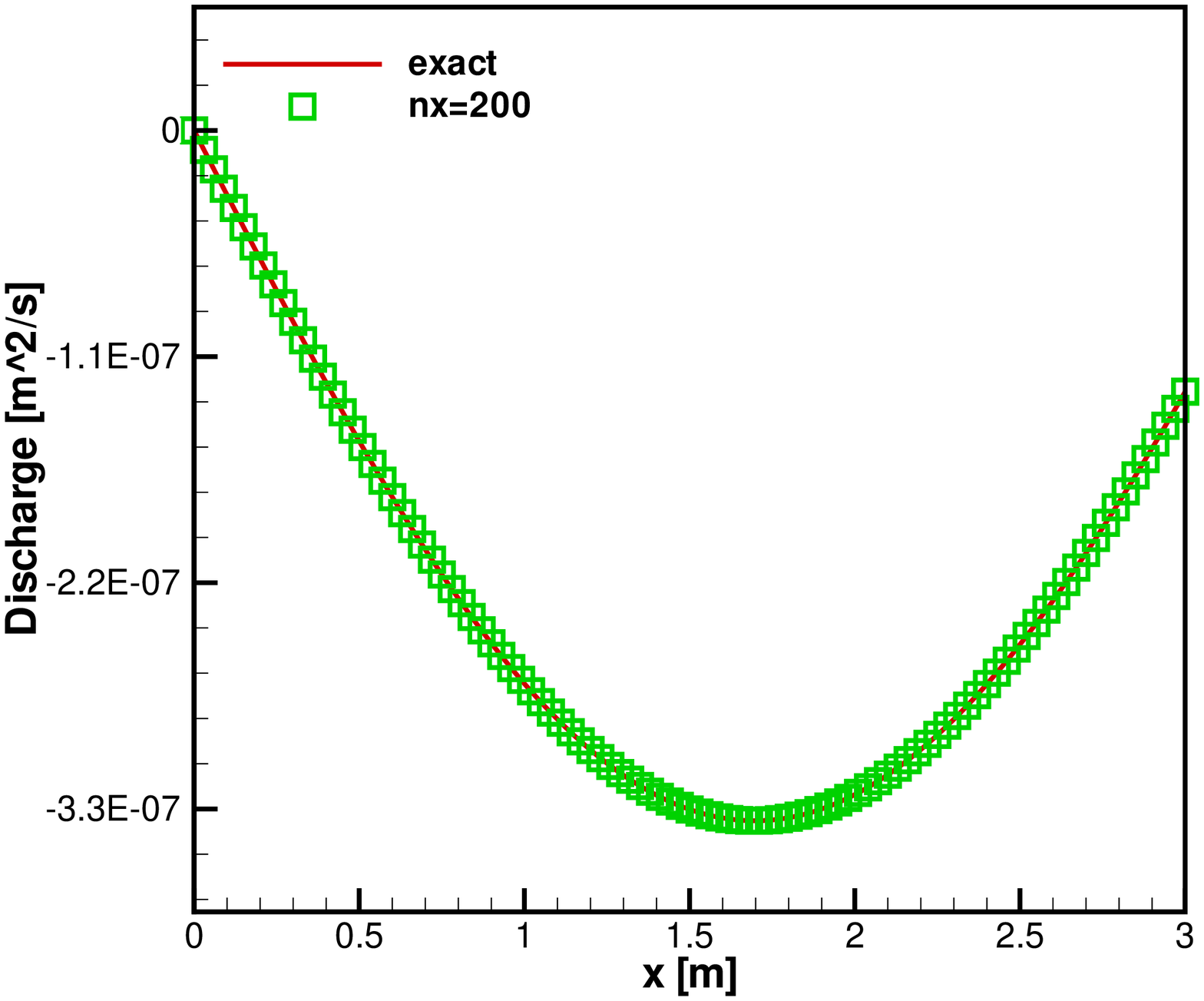}   \\
\includegraphics[width=3.1in]{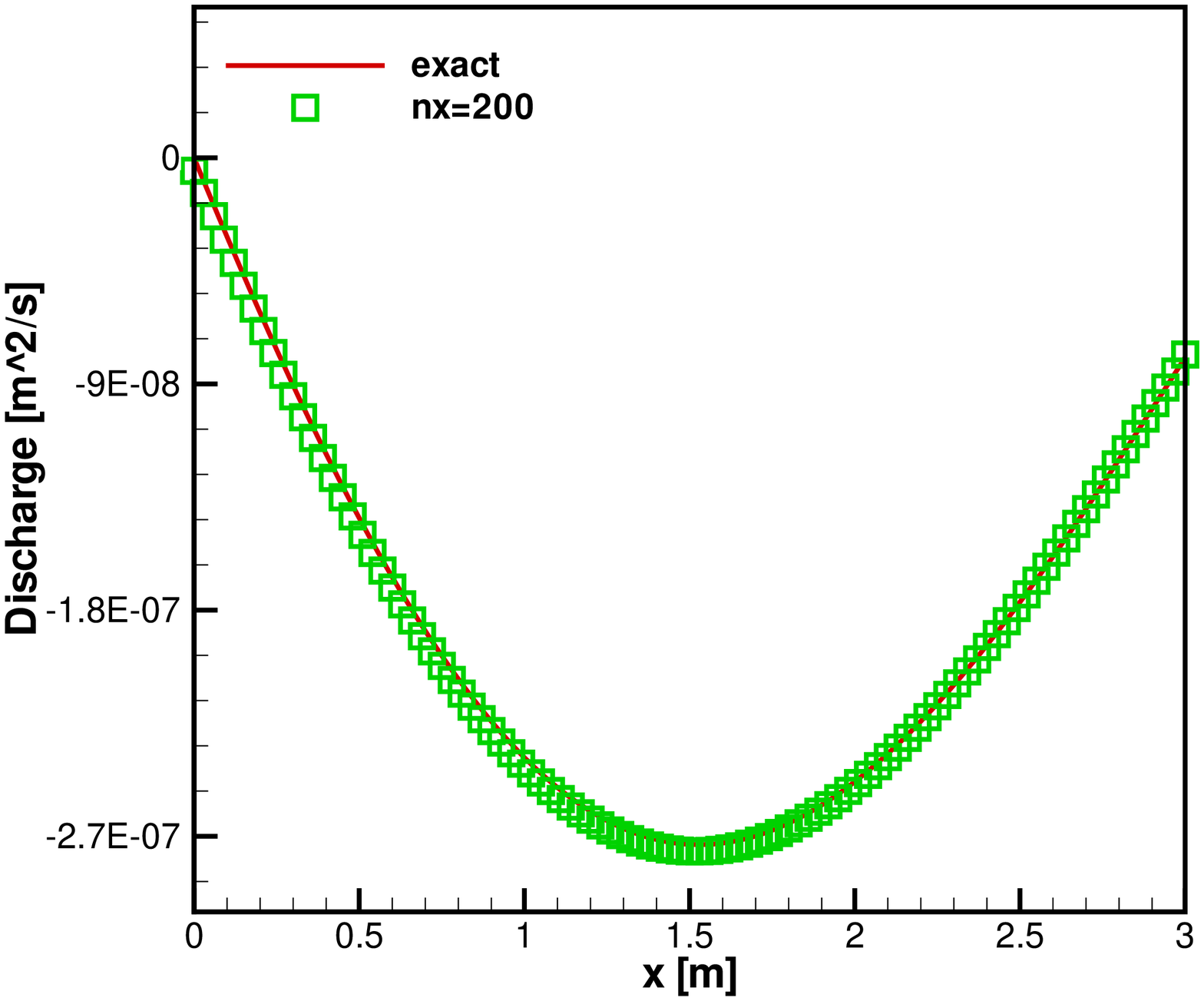}
\includegraphics[width=3.1in]{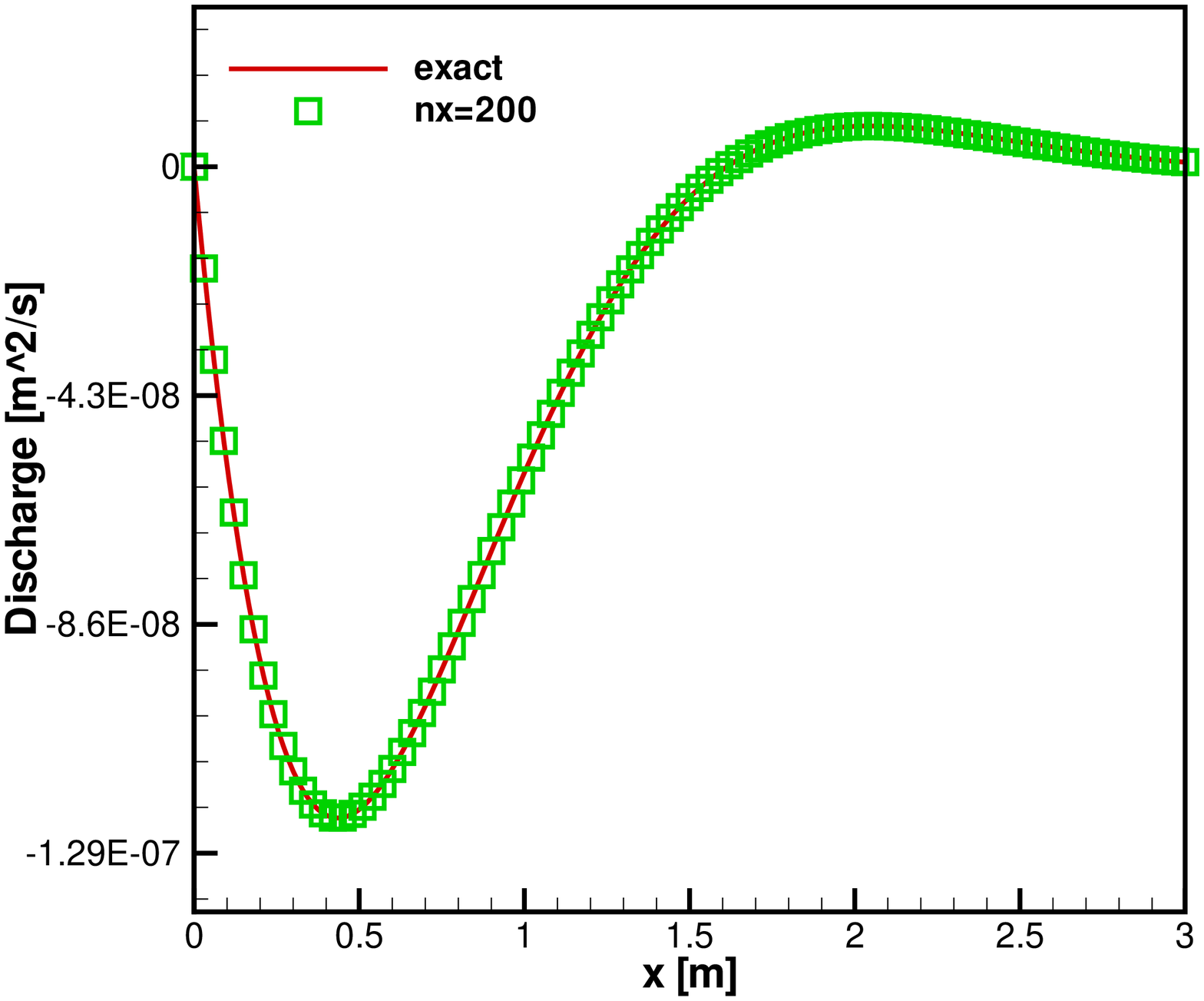}
\caption{ The numerical solutions of the propagation of a pulse to and from an expansion in Section \ref{wave-damping} with $ 200$ cells at $t = 25 \, s$. The damping of a discharge wave with $C_f = 0$  (upper left), $C_f = 0.000022$ (upper right), $C_f = 0.000202$ (lower left) and $C_f = 0.005053$ (lower right).  } \label{f:damping}
\end{figure}

\newpage

\section{Conclusions} \label{the-conclusion}

In this paper, we have presented a well-balanced finite difference WENO scheme to solve the blood flow model. A special splitting of the source term allows us to design specific approximations such that the resulting WENO scheme maintains the well-balanced property for steady state solutions,
and at the same time keeps theirs  original high order accuracy and essentially non-oscillatory
property for general solutions. Extensive numerical examples are given to demonstrate the well-balanced property, high order accuracy, and steep shock transitions of the proposed numerical scheme. The approach is
quite general and can be adapted to high order finite volume schemes and discontinuous Galerkin finite element
methods, which constitutes an ongoing work.

%Rigorous theoretical analysis as well as extensive numerical experiments all strongly suggest that the present
% schemes keep the steep shock transitions of the proposed numerical scheme.

\section*{Acknowledgements}
The research of the second author is supported
by the National Natural Science Foundation of P.R. China (No. 11201254, 11401332) and the Project for Scientific Plan of Higher Education in Shandong Providence of P.R. China (No. J12LI08). This work was partially performed at the State Key Laboratory of Science/Engineering Computing of P.R. China by virtue of the computational resources of Professor Li Yuan's group. The first author is also thankful to Professor Li Yuan for his kind invitation.

%The research of the second author is sponsored by NSF grant
%DMS-1216454, ORNL and the U. S. Department of Energy, Office of Advanced
%Scientific Computing Research. The work was partially performed at ORNL, which is managed
%by UT-Battelle, LLC, under Contract No. DE-AC05-00OR22725.
%The authors also sincerely acknowledge the constructive remarks of the referees that have led to many improvement in %this paper.
%The authors also appreciate the three anonymous
%reviewers especially the first one for the invaluable
%comments and suggestions, which play an important role to improve this paper.

% ¸½Â¼

%% References
%%
%% Following citation commands can be used in the body text:
%% Usage of \cite is as follows:
%%   \cite{key}         ==>>  [#]
%%   \cite[chap. 2]{key} ==>> [#, chap. 2]
%%

%% References with bibTeX database:

%\bibliographystyle{elsarticle-num}
%\bibliography{<your-bib-database>}

%% Authors are advised to submit their bibtex database files. They are
%% requested to list a bibtex style file in the manuscript if they do
%% not want to use elsarticle-num.bst.

%% References without bibTeX database:

%\bigskip

%\bibliographystyle{plain}

\bibliographystyle{unsrt}
%\bibliography{refs}

\end{document}